\numberwithin{equation}{section}
\newtheorem{thm}{Theorem}[section]
\newtheorem{prop}[thm]{Proposition}
\newtheorem{lem}[thm]{Lemma}
\newtheorem{cor}[thm]{Corollary}
\newtheorem{df}[thm]{Definition}
\newtheorem{rem}[thm]{Remark}
\newcommand{\C}{\mathbb{C}}
\newcommand{\bP}{\mathbb{P}}
\newcommand{\R}{\mathbb{R}}
\newcommand{\Z}{\mathbb{Z}}
\newcommand{\cH}{\mathcal{H}}
\newcommand{\cO}{\mathcal{O}}
\newcommand{\cU}{\mathcal{U}}
\newcommand{\cV}{\mathcal{V}}
\newcommand{\divi}{\mathrm{div}}
\newcommand{\Gal}{\mathrm{Gal}}
\newcommand{\ord}{\mathrm{ord}}
\newcommand{\Spec}{\mathrm{Spec}}
\newcommand{\ovl}[1]{\overline{#1}}
\newcommand{\wt}[1]{\widetilde{#1}}
\title{Belyi's theorem in characteristic two}
\author{Yusuke Sugiyama}
\address{Yusuke Sugiyama\\
Department of Mathematics\\
Graduate School of Science\\
Osaka University\\
1-1 Machikaneyama Toyonaka Osaka 
560-0043 JAPAN}
\email{\href{mailto:u.sugiyama@gmail.com}{\texttt{u.sugiyama@gmail.com}}}
\author{Seidai Yasuda}
\address{Seidai Yasuda\\
Department of Mathematics\\
Graduate School of Science\\
Osaka University\\
1-1 Machikaneyama Toyonaka Osaka 
560-0043 JAPAN}
\email{\href{mailto:s-yasuda@math.sci.osaka-u.ac.jp}{\texttt{s-yasuda@math.sci.osaka-u.ac.jp}}}
\date{\today}
\begin{document}
\maketitle
\begin{abstract}
We prove an analogue of Belyi's theorem in characteristic two (cf.\ Section 1). Our proof consists of the following three steps. We first introduce a new notion called {\em pseudo-tame} for morphisms between curves over an algebraically closed field of characteristic two. Secondly, we prove the existence  of a ``pseudo-tame'' rational function by proving vanishing of an obstruction class. Finally we will construct a tamely ramified rational function from the ``pseudo-tame'' rational function.
\end{abstract}

\section{Introduction}

Belyi's theorem (cf.\ \cite{B}) states that, 
for a proper smooth curve $X$ over the
field of complex numbers, $X$ is defined over a number field if and only if $X$ admits a rational function $f$ on $X$ such that $f$, regarded as a morphism
from $X$ to the projective line, has at most three branch points. 
In \cite{Sa}, M.\ Sa\"idi remarked that the following analogue 
of Belyi's theorem holds in odd positive characteristics: 
for a proper smooth curve $C$ over a field of odd characteristic, $C$ is defined over a finite field if and only if $C$ admits a rational function $f$ such that $f$, regarded as a morphism from $C$ to the projective line, is tamely ramified everywhere and has at most three branch points. 
In characteristic two, it is easy to see that the ``if" part of the same statement holds true, however the ``only if" part has been remained open. In this paper, we will give a proof of the ``only if" part by proving the following
statement, which is well-known when the base field $k$ is not of
characteristic two (cf.\ \cite{F}):
\begin{thm} \label{thm:existence_tame}
Let $X$ be a proper smooth 
curve over an algebraically closed field $k$. Then $X$ 
admits a morphism $f\colon X\to \bP^{1}_{k}$ that is tamely
ramified everywhere.
\end{thm}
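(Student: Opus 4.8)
The plan is to reduce immediately to the case $\mathrm{char}(k)=2$, since for $\mathrm{char}(k)\neq 2$ the result is classical: a generic projection realizes $X$ as a simply ramified cover of $\bP^{1}_{k}$, all ramification indices being $1$ or $2$, and an index-$2$ ramification point is tame as soon as $2$ is invertible in $k$ (cf.\ \cite{F}). The entire difficulty is that in characteristic two an index-$2$ point is \emph{wild}, so simple ramification no longer suffices. I would therefore first reformulate tameness differentially. For a separable $f$ and a point $x$ with $f(x)=a$ finite, writing $f-a=\sum_{k\ge e_{x}}c_{k}t^{k}$ in a local uniformizer $t$, one has $df=\sum_{k\ \mathrm{odd}}c_{k}t^{k-1}\,dt$ because $kc_{k}=0$ whenever $k$ is even in characteristic two. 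Hence $\ord_{x}(df)$ is always even, the different exponent equals $\ord_{x}(df)$, and $f$ is tamely ramified at $x$ precisely when the lowest-order term of $f-a$ has odd exponent, i.e.\ when $e_{x}$ itself is odd; the mildest wild point is $e_{x}=2$ with $c_{3}\neq 0$.

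This computation also reveals the correction mechanism, which I expect to be the content of the passage from a ``pseudo-tame'' to a genuinely tame function. Since $\bigl(\sum_{i}d_{i}t^{i}\bigr)^{2}=\sum_{i}d_{i}^{2}t^{2i}$ in characteristic two, adding a square $g^{2}$ to $f$ alters only the even-order coefficients of $f-a$ at every point while leaving $d(f+g^{2})=df$ unchanged. At a wild point with $e_{x}=2$ I can thus choose the first-order part of $g$ so that $d_{1}^{2}=c_{2}$, which cancels the offending term $c_{2}t^{2}$ and leaves $c_{3}t^{3}$ as the new leading term: the different exponent stays $2$ but the ramification index is promoted from the wild value $e_{x}=2$ to the tame value $e_{x}=3$, while an unramified point keeps its surviving coefficient $c_{1}$ and stays unramified. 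I would accordingly define \emph{pseudo-tame} to mean that $f$ is separable with all finite ramification of the simplest wild type ($e_{x}\le 2$ and different exponent exactly $2$ at the ramified points), together with the analogous minimal condition over $\infty$, and then remove the wildness by a single well-chosen square.

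The two remaining tasks are to produce a pseudo-tame $f$ and to globalize the square-correction. For the first I would start from a simply ramified $f_{0}\colon X\to\bP^{1}_{k}$ coming from a generic pencil in a sufficiently positive linear system and arrange that every ramification point has different exponent exactly $2$, i.e.\ $c_{3}\neq 0$; expressing the failure of this minimal-wildness condition as the non-vanishing of a section and forcing its vanishing for a generic member is precisely a vanishing-of-obstruction statement in $H^{1}$ of a suitable line bundle, supplied by Riemann--Roch once the degree of the system is taken large. For the second, the correcting $g$ must simultaneously satisfy the finitely many prescriptions $d_{1}(x_{i})^{2}=c_{2}(x_{i})$ at all wild points $x_{i}$ (the needed square roots exist since $k$ is algebraically closed); the existence of such a $g$ with controlled poles is again an $H^{1}$-vanishing statement once the bounding divisor is of large enough degree.

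The main obstacle, and the reason a naive argument fails, is the behavior over $\infty$ under squaring: $g^{2}$ has poles of \emph{even} order, so adding it threatens to create wild ramification over $\infty$, and it simultaneously perturbs the even-order data at every wild point, so the local corrections are coupled through one global function rather than being independent. Making all of these local tame models patch into a single global $g$ without introducing new wild points is exactly what the obstruction class measures, and showing that this class vanishes---by arranging the relevant line bundle to have vanishing $H^{1}$---is where the real work lies and where the characteristic-two phenomena are genuinely confronted.
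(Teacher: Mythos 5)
Your local analysis is correct as far as it goes: in characteristic two every different exponent $\ord_x(df)$ is even, tameness at a point over a finite branch value is equivalent to oddness of $e_x$, adding a square $g^2$ perturbs only the even-order coefficients, and one square-root condition on the first Taylor coefficient of $g$ at each wild point of type $(e_x,d_x)=(2,2)$ promotes that point to a tame one with $e_x=3$. The fatal problem is the degree count in the global correction step, which you flag as ``the main obstacle'' but do not resolve, and which in fact cannot be resolved in the form you propose. By Riemann--Hurwitz, a degree-$n$ cover that is tame over $\infty$ and has only $(2,2)$-points elsewhere has roughly $n/2+g_X$ such wild points, where $g_X$ is the genus of $X$; hence your correcting function $g$ must satisfy roughly $n/2+g_X$ independent linear conditions, which by Riemann--Roch forces $\deg(g)\gtrsim n/2+g_X$, with all poles of $g$ located over $\infty$ (any pole of $g^2$ at a new point has even order and creates a fresh wild point over $\infty$, so the poles cannot be placed elsewhere). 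But preserving tameness over $\infty$ requires $2\deg(g)<n$. The two requirements differ by an amount of order $g_X$ \emph{uniformly in $n$}, so no ``sufficiently positive linear system'' closes the gap: your scheme can only succeed in genus zero, where the theorem is vacuous. Trying to evade the count by making most ramification points tame of type $e_x=3$ does not help either, since $c_2=0$ at a zero of $df$ is a non-generic condition.

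This genus-sized deficit is precisely what the paper's machinery is built to absorb, which is why the actual argument differs at every stage. The paper's ``pseudo-tame'' is a condition modulo \emph{fourth} powers (all Laurent coefficients of $f$ below $\ord_x(df)+1$ sit in degrees divisible by $4$), not your minimal-wildness condition; the obstruction to a globally pseudo-tame function is a class $\beta(X)\in H^1(X,B_X)$ with $B_X=\cO_X/\cO_X^2$ a fixed sheaf whose cohomology is nonzero exactly when the Jacobian of $X$ is non-ordinary --- it does not vanish for degree reasons, and its vanishing (Theorem \ref{thm:beta_vanishing}) is proved by an explicit Serre-duality residue computation, with Tsen's theorem then needed to realize the trivializing cochain by actual rational functions. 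Finally, the passage to a tame function in Theorem \ref{thm:tame exists} is $f\mapsto f^3+h^4$ rather than $f\mapsto f+g^2$: cubing triples the pole order while the correction only needs $4\deg(h)<3\deg(f)$, and it is exactly the ratio $3:4$ (as opposed to your $1:2$) that makes the key inequality $8g_X+4(e+g_X-1)\le 3(2e-1)$ hold once $\deg(f)\ge 12g_X-2$. To salvage your outline you would have to replace squares by fourth powers together with a cubing step (or some other device that gains the missing factor) \emph{and} confront the $H^1(X,B_X)$-obstruction that Riemann--Roch alone cannot kill; as written, the proposal does not prove the theorem.
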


Let us briefly explain an organization of this paper.
In Section \ref{sec:p-tame}, we will introduce a new notion called a ``pseudo-tame" morphism and observe their basic properties.
In Section \ref{sec:beta}, we introduce a certain obstruction class to the existence of a ``pseudo-tame" rational function on a curve in characteristic two, and then prove that this obstruction class always vanishes. As a consequence, we have a ``pseudo-tame" rational function on any curve in characteristic two. In Section \ref{sec:exists}, we will construct a tamely ramified rational function from the ``pseudo-tame'' rational function.
In Section \ref{sec:upper}, we give explicit upper bounds, for any given curve $X$ in characteristic two, of the minimums of the degrees of pseudo-tame ramified rational functions on $X$ and those of tamely ramified rational functions on $X$.

\textbf{Acknowledgment}.
The authors would like to thank Akio Tamagawa for discussion 
concerning this article.
They would especially like to thank Jaap Top and Roos Westerbeek
for their careful reading of the manuscript and for
kindly having pointed out a mistake in the proof of 
Theorem \ref{thm:tame exists} in an earlier version of the
manuscript.
They would also like to thank Jaap Top for informing them
that Seher Tutdere and Nurdag\"ul Anbar have obtained 
a way, different from the one presented in this paper, 
to correct the mistake mentioned above in 
an earlier version of the manuscript.
During this research, the second author was partially supported by 
JSPS Grant-in-Aid for Scientific Research 15H03610.

\section{Pseudo-tame morphisms}\label{sec:p-tame}
From now on until the end of this paper, we fix
an algebraically closed field $k$ of characteristic two.
By a ``{\em curve}'', we mean a one-dimensional integral scheme 
which is proper and smooth over $k$. 
For a curve $X$, we denote by $k(X)$ the field of rational
functions on $X$.

\subsection{Basic facts on curves}\label{sec:basic}
In this paragraph we collect some basic facts on curves 
over an algebraically closed field of characteristic two (cf.\ \cite{H}).

Let $X$ be a curve.
%
Since the relative Frobenius on $X$ is of degree two and 
$k$ is algebraically closed (in particular perfect), 
$k(X)$ is a two-dimensional $k(X)^{2}$-vector space, where 
$k(X)^2=\{ f^2\mid f\in k(X)\}$. 
Note that the differential $dg$ vanishes on $X$ 
if and only if $g \in k(X)^{2}$.
Thus, for any $g \in k(X)$ with $dg\neq0$, $k(X)$ is the direct sum
$k(X)^{2}\oplus k(X)^{2}g$ as a $k(X)^{2}$-vector space. 

We denote by $B_X$ the sheaf $\mathcal{O}_{X}/\mathcal{O}^{2}_{X}$ 
of $\cO^2_X$-modules on $X$ and call it Raynaud's sheaf (cf.\ \cite{R}). 
The Jacobian of $X$ is ordinary if and only if $H^0(X,B_{X})=0$.

Let $\ovl{g} \in k(X)/k(X)^{2}$. We write $d\ovl{g}$ for the 
differential form $dg$, where $g \in k(X)$ is a representative 
of $\ovl{g}$ (note that $d\ovl{g}$ is independent of 
the choice of $g$).
For a closed point $x \in X$, we say that $\ovl{g}$ {\em regular at $x$} 
if the following two equivalent conditions hold:
\begin{enumerate}
\item There exists a representative $g \in k(X)$ of $\ovl{g}$ 
that is regular at $x$.
\item $d\ovl{g}\in \Omega_{X}$ is regular at $x$.
\end{enumerate}
For an open subset $U\subset X$, we identify $H^{0}(U,B_{X})$ 
with the set of elements of $k(X)/k(X)^{2}$ which are regular 
at any closed points of $U$.

\subsection{Pseudo-tame morphisms}
In this paragraph we introduce the notion ``pseudo-tame"
for morphisms of curves.

\begin{df} \label{df:p-tame_morphism}
Let $X,Y$ be curves and $f\colon X\rightarrow Y$ be a finite morphism. 
Let $x\in X$ be a closed point and set $y=f(x)$. 
Let $t\in \mathcal{O}_{Y,y}$ be an uniformizer at $y$.
We say that $f$ is {\em pseudo-tame} at $x$ if 
there exists an element $h\in \mathcal{O}_{X,x}$ such 
that $v_{x}(f^{*}t+h^{4})$ is an odd number.
One can check easily that this property is independent of the
choice of $t$.
For a nonempty open subset $U\subset X$, we say that $f$ 
is pseudo-tame on $U$ if $f$ is pseudo-tame at any closed points of $U$.
\end{df}

Note that if $f$ is tame at $x$, i.e., the morphism
$X \to \bP^1_k$ given by $f$ is at most tamely ramified at $x$
then $f$ is pseudo-tame at $x$.

\begin{df} \label{df:p-tame_function}
For a curve $X$ and $f\in k(X)$, we say that $f$ is {\em pseudo-tame} at a closed point $x\in X$ if  $f\colon X\to \bP^{1}_{k}$ is pseudo-tame at $x$. For a nonempty open subset $U\subset X$, we say that $f$ is {\em pseudo-tame on $U$} if $f$ is pseudo-tame at any closed points of $U$. 

Note that $f\in k(X)$ is pseudo-tame at $x$ if and only if $f$ turns into an uniformizer at $x$ by the linear fractional transformation of $PGL_2(k(X)^4)$ on $k(X)$ (see Remark \ref{rem:p-tame_characterization}).
\end{df}

\subsection{Properties of pseudo-tame morphisms}
Set $\cH := k(X)\setminus k(X)^{2}$.
In this paragraph, we first introduce an element 
$a(f,g) \in k(X)/k(X)^{2}$ for $f,g \in \cH$ that will play
an important role when we will define an obstruction 
class $\beta(X)\in H^{1}(X,B_{X})$. 
We then state some basic properties of pseudo-tame morphisms.

\begin{rem} \label{rem:expansion}
Let $x \in X$ be a closed point and $y=f(x)$.
Let $s,t$ be uniformizers of $\cO_{X,x}$, $\cO_{Y,y}$,
respectively. Let us consider the power series expansion 
of $f^{\ast}(t)\in \mathcal{O}_{X,x}$ with respect to $s$.
Then we have the following: 
$f$ is pseudo-tame at $x$ if and only if for any non-vanishing term 
in the power series expansion of $f^{\ast}(t)$ with degree smaller
than $ord_{x}(df^{*}t)+1$, the degree is a multiple of four.

Moreover, by considering the Laurent series expansion 
of $f\in k(X)$ at $x$ with respect to $s$, we have the following:
$f\in k(X)$ is pseudo-tame at $x$ if and only if 
for any non-vanishing term in the Laurent series expansion of $f$ with 
degree smaller than  $ord_{x}(df)+1$, the degree is a multiple of four.
\end{rem}

\begin{prop} \label{prop:p-tame_composite}
Let $X,Y,Z$ be curves and $f\colon X\rightarrow Y$, $g\colon Y\rightarrow Z$ be finite.
Let $x\in X$ be a closed point and set $y=f(x)$. 
Suppose $f,g$ are pseudo-tame at $x,y$ respectively. 
Then $g\circ f$ is pseudo-tame at $x$.
\end{prop}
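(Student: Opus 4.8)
The plan is to reduce, via the characteristic-two identity $a^{4}+b^{4}=(a+b)^{4}$, to a purely local statement about the pullback of an element of odd order, and then to analyze that pullback through the power-series characterization of Remark \ref{rem:expansion}. Throughout, fix uniformizers $s,t,u$ of $\cO_{X,x},\cO_{Y,y},\cO_{Z,z}$ where $z=g(y)$. First I would record the reformulation of the definition: $w$ is pseudo-tame at $x$ if and only if $w=P^{4}+Q$ for some $P,Q$ (in $\cO_{X,x}$) with $v_{x}(Q)$ odd. Applying this to $g$ at $y$ gives $g^{*}u=\tilde P^{4}+\tilde Q$ with $v_{y}(\tilde Q)$ odd, and pulling back yields $(g\circ f)^{*}u=(f^{*}\tilde P)^{4}+f^{*}\tilde Q$. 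Hence if I can produce a decomposition $f^{*}\tilde Q=P_{2}^{4}+Q_{2}$ with $v_{x}(Q_{2})$ odd, then $(g\circ f)^{*}u=(f^{*}\tilde P+P_{2})^{4}+Q_{2}$ exhibits $g\circ f$ as pseudo-tame at $x$. Everything therefore reduces to the claim: \emph{if $f$ is pseudo-tame at $x$ and $w\in\cO_{Y,y}$ has odd order at $y$, then $f^{*}w$ is pseudo-tame at $x$.}

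For this claim I would use the two invariants coming from Remark \ref{rem:expansion}: for $a$ with $da\neq0$ let $\mu(a)=\ord_{x}(da)+1$ be the least exponent occurring in the odd-degree part of the $s$-expansion, and let $\nu(a)$ be the least exponent $\equiv 2\pmod 4$ that occurs; then $a$ is pseudo-tame at $x$ precisely when $\mu(a)<\nu(a)$. Let $e=v_{x}(f^{*}t)$ be the ramification index, so that $v_{x}(f^{*}w)=e\cdot v_{y}(w)$. When $e$ is odd this order is odd, and an element of odd order is trivially pseudo-tame, so the substance lies in the case $e\equiv0\pmod4$. Write $f^{*}t=P^{4}+Q$ with $v_{x}(Q)=\rho$ odd; since $e$ is even and $f$ is pseudo-tame, necessarily $e=4v_{x}(P)<\rho$. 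Writing $w=v\,t^{2b+1}$ with $v\in\cO_{Y,y}^{\times}$, the chain rule $d(f^{*}w)=f^{*}(dw/dt)\cdot d(f^{*}t)$ computes $\mu(f^{*}w)=2be+\rho$ exactly.

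The main obstacle is the matching lower bound on $\nu(f^{*}w)$, i.e.\ controlling the exponents $\equiv 2\pmod 4$, for which the chain rule is not directly available. Here I would introduce the additive operator $C(a)=\sqrt{a_{\mathrm{ev}}}$, the square root of the even-degree part of the expansion (well defined as $k$ is perfect), which satisfies $C(R^{2}a)=R\,C(a)$ and $\nu(a)=2\mu(C(a))$; this last identity converts the required $\nu$-estimate into a $\mu$-estimate, to which the chain-rule computation applies after writing $C\bigl((f^{*}t)^{2b+1}\bigr)=(f^{*}t)^{b}\,C(f^{*}t)$ and splitting off the even part of $f^{*}v$. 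The structural input that makes the bound go through is that the image $f^{*}(\cO_{Y,y})$ is the subring $k[[f^{*}t]]$ of $\widehat{\cO}_{X,x}$, and since $f^{*}t\equiv P^{4}\pmod{\fram_{x}^{\rho}}$ every element of this image is congruent to a fourth power modulo $\fram_{x}^{\rho}$; consequently both $\mu$ and $\nu$ of any pulled-back unit are $\geq\rho$. Feeding this through the product estimates for $C(f^{*}w)$, the case hypothesis $e<\rho$ gives $\nu(f^{*}w)\geq 2be+\rho+1>\mu(f^{*}w)$, which proves the claim and hence the proposition. I expect the order bookkeeping in this final estimate — isolating the even part of the pulled-back unit and tracking orders through $C$ — to be the only genuinely delicate point.
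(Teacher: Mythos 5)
Your proposal is correct and follows essentially the same route as the paper, whose entire proof of this proposition is a one-sentence appeal to Remark \ref{rem:expansion} (``look at the formal expansions of $f$, $g$ and $g\circ f$''): your argument is a careful implementation of exactly that local power-series analysis. The reduction via $(a+b)^4=a^4+b^4$ to the claim about pullbacks of odd-order elements, the case split on $e$ odd versus $e\equiv 0\pmod 4$, the congruence $f^*v\equiv(\text{fourth power})\pmod{\fram_x^{\rho}}$, and the final estimate $\nu(f^*w)\ge 2be+\rho+1>\mu(f^*w)=2be+\rho$ all check out.
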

\begin{proof}
The assertion follows from Remark \ref{rem:expansion} 
by looking at the formal expansions
$f$, $g$ and $g \circ f$.
\end{proof} 

We set $\Gamma :=PGL_{2}(k(X)^{4})$. 
Observe that $\mathcal{H}$ is 
the set of finite separable rational functions on $X$ and that $\Gamma$ acts on $\mathcal{H}$ freely by the linear fractional transformation. We would like to propose the following analogy.
$$
\begin{array}{c|c}
\hline
\C & k(X) \\
\hline
\C \setminus \R & \mathcal{H}=k(X)\setminus k(X)^2 \\
PGL_2(\R) & PGL_2(k(X)^2) \\
PGL_2(\Z) & \Gamma = PGL_2(k(X)^4) \\
\hline
\end{array}
$$
\begin{rem} \label{rem:p-tame_characterization}
For any $f\in \mathcal{H}$ and for any closed point $x\in X$, 
there exists $\gamma \in \Gamma$ such that $v_{x}(\gamma f)\in \{1,2\}$.
By Remark \ref{rem:expansion} above, 
we have another characterization of pseudo-tame rational functions as follows: \par An element $f\in \mathcal{H}$ is pseudo-tame at $x\in X$ if and only if there exists $\gamma \in \Gamma$ such that $v_{x}(\gamma f)=1$.
\end{rem} 
Let $f,g$ be in $\mathcal{H}$.
Since $k(X)=k(X)^{2}\oplus k(X)^{2}g$, we can find $F_{i},f_{j}\in k(X)$ such that
$$ f=F_{0}^{2}+F_{1}^{2}g=f_{0}^{4}+f_{1}^{4}g+f_{2}^{4}g^{2}+f_{3}^{4}g^{3}.$$
Then we define $a(f,g)\in k(X)/k(X)^{2}$ as follows.

\begin{df} \label{df:a(f,g)}
For $f,g\in\mathcal{H}$, we set 
\begin{align*}
A(f,g):= & ({dF_{0}}/{dg})^{2}+({dF_{1}}/{dg})^{2}g+({dF_{1}}/{dg})F_{1}\\
= & f_{1}^{2}f_{3}^{2}+f_{2}^{4}  \in k(X),
\end{align*}
and
\begin{align*}
a(f,g):= & \frac{A(f,g)g}{df/dg}\mod\, k(X)^{2}\\
=& \frac{(f_{1}^{2}f_{3}^{2}+f_{2}^{4})g}{f_{3}^{4}g^{2}+f_{1}^{4}}\mod\, k(X)^{2} \in k(X)/k(X)^{2}.
\end{align*}
\end{df}

The following property of $a(f,g)$ is important.
\begin{prop} \label{prop:1-cocycle}
For $f,g,h\in\mathcal{H}$, the following $\check{C}ech$ 
$1$-cocycle condition holds: 
$$ a(f,g)+a(g,h)+a(h,f)=0\in k(X)/k(X)^{2}.$$
\end{prop}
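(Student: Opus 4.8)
My plan is to reduce the cyclic identity to a single transformation law for $A(f,g)$ and then argue exactly as one does for the classical Schwarzian derivative; the table relating $\Gamma$ to $PGL_2(\Z)$ strongly suggests that this is the correct picture. The first step is two elementary simplifications of the quantities in Definition \ref{df:a(f,g)}. From the expansion $f=f_{0}^{4}+f_{1}^{4}g+f_{2}^{4}g^{2}+f_{3}^{4}g^{3}$ one reads off $df/dg=(f_{1}^{2}+f_{3}^{2}g)^{2}$ and $A(f,g)=f_{1}^{2}f_{3}^{2}+f_{2}^{4}=(f_{2}^{2}+f_{1}f_{3})^{2}$; in particular both $A(f,g)$ and $A(f,g)/(df/dg)$ are squares, so $A(f,g)\in k(X)^{2}$ and $dA(f,g)=0$. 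Since also $d(df/dg)=0$, differentiating the representative $A(f,g)\,g/(df/dg)$ collapses to the clean formula
\[ d\bigl(a(f,g)\bigr)=\frac{A(f,g)}{df/dg}\,dg. \]

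Next I would pass to rational $1$-forms. By the basic fact recalled in Section \ref{sec:basic}, that $dg=0$ if and only if $g\in k(X)^{2}$, the map $d\colon k(X)/k(X)^{2}\to\Omega^{1}_{k(X)/k}$ is injective. Hence, applying $d$, the asserted identity $a(f,g)+a(g,h)+a(h,f)=0$ is equivalent to the identity of rational $1$-forms
\[ \frac{A(f,g)}{df/dg}\,dg+\frac{A(g,h)}{dg/dh}\,dh+\frac{A(h,f)}{dh/df}\,df=0. \]
Writing $\eta(u,v):=\tfrac{A(u,v)}{du/dv}\,dv$, this is the cyclic relation $\eta(f,g)+\eta(g,h)+\eta(h,f)=0$.

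The crux is then the following characteristic-two analogue of the Schwarzian chain rule: for all $f,g,h\in\cH$,
\[ A(f,h)=A(f,g)\Bigl(\frac{dg}{dh}\Bigr)^{2}+A(g,h)\,\frac{df}{dg}. \]
Granting it, the rest is purely formal. Dividing by $df/dh=(df/dg)(dg/dh)$ and multiplying by $dh$ turns the chain rule into the additivity $\eta(f,h)=\eta(f,g)+\eta(g,h)$. Setting $f=h$ and using $A(h,h)=0$ gives $A(g,h)=A(h,g)(dg/dh)^{3}$, which is exactly the symmetry $\eta(g,h)=\eta(h,g)$. The cyclic sum then telescopes: $\eta(f,g)+\eta(g,h)+\eta(h,f)=\eta(f,h)+\eta(h,f)=2\,\eta(f,h)=0$, which is the required $1$-form identity.

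The entire difficulty is thus concentrated in the chain rule, which I expect to be the main obstacle. To prove it I would fix $f$ and compare the two expansions of $f$, in the bases $\{1,g,g^{2},g^{3}\}$ and $\{1,h,h^{2},h^{3}\}$ of $k(X)$ over $k(X)^{4}$. Expanding $g=\sum_{i}\gamma_{i}^{4}h^{i}$ and substituting into $f=\sum_{i}f_{i}^{4}g^{i}$, one re-expands in powers of $h$ to express the $h$-coordinates $\tilde f_{i}$ of $f$ through the $g$-coordinates $f_{i}$ of $f$ and the $h$-coordinates $\gamma_{i}$ of $g$; inserting these into $A(f,h)=(\tilde f_{2}^{2}+\tilde f_{1}\tilde f_{3})^{2}$ and simplifying—using repeatedly that fourth powers are $d/dh$-constants in characteristic two, so that the unwanted cross terms are squares and drop out—should reproduce the two terms on the right-hand side. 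The only genuine labor is the bookkeeping of this change-of-basis substitution; the toy case $g=t$, $h=t+t^{2}$, $f=t+t^{3}$ on $\bP^{1}_{k}$ already exhibits every cancellation and can serve as a check on the normalizations.
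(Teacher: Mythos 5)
Your formal reductions are all correct and in fact track the paper's own proof quite closely: the paper likewise observes that $A(f,g)$ and $df/dg$ lie in $k(X)^2$, reduces the statement to the $1$-form identity $\sum \frac{A(u,v)}{du/dv}dv = 0$, and then uses the symmetry $A(f,g)=(df/dg)^3A(g,f)$ to boil everything down to a single polynomial identity among the $A$'s. Your ``chain rule'' $A(f,h)=A(f,g)(dg/dh)^2+A(g,h)(df/dg)$ is precisely the paper's displayed identity $A(g,f)+(dh/df)^2A(g,h)+(dg/dh)A(h,f)=0$ after the cyclic relabeling $f\to h$, $g\to f$, $h\to g$, so it is true, and your derivation of the symmetry and the telescoping from it is a clean way to package the endgame.

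The genuine gap is that you never prove the chain rule: you only describe a substitution strategy and assert that the bookkeeping ``should'' reproduce the right-hand side. That identity is the entire content of the proposition --- everything before and after it is formal --- so the proof is incomplete as written. Moreover, the route you propose (re-expanding $f=\sum f_i^4 g^i$ through $g=\sum \gamma_i^4 h^i$ in the quartic basis $1,h,h^2,h^3$ over $k(X)^4$, then evaluating $A(f,h)=(\tilde f_2^2+\tilde f_1\tilde f_3)^2$) is substantially heavier than necessary: it forces you to compute $g^2$ and $g^3$ in the $h$-basis and track nine coefficients. The paper instead verifies the identity using the quadratic expansions $g=G_0^2+G_1^2f$, $h=H_0^2+H_1^2f$, $g=g_0^2+g_1^2h$ together with the first expression $A(f,g)=(dF_0/dg)^2+(dF_1/dg)^2g+(dF_1/dg)F_1$ from Definition \ref{df:a(f,g)}; the change of basis then collapses to the two relations $G_0=g_0+g_1H_0$ and $G_1=g_1H_1$, and the cancellation is a few lines. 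I would either carry out your quartic computation in full or, better, switch to the quadratic coordinates before attempting it. Your toy example on $\bP^1_k$ is a sanity check, not a proof, and in particular cannot certify an identity that must hold for arbitrary $f,g,h\in\cH$ on an arbitrary curve.
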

\begin{proof}
We note that an element $F \in k(X)$ belongs to $k(X)^{2}$ 
if and only if $dF=0\in \Omega_{X}$ and that $A(f,g),df/dg\in k(X)^{2}$. Hence it suffices to show  $$\dfrac{A(f,g)}{df/dg}dg+\dfrac{A(g,h)}{dg/dh}dh+\dfrac{A(h,f)}{dh/df}df=0\in \Omega_{X}.$$
The relation
$A(f,g)=(df/dg)^{3}A(g,f)$
reduces us to showing the equality
$$
A(g,f)+
(dh/df)^2 A(g,h)
+ (dg/dh)A(h,f)=0 \in k(X).
$$
Write $g,h$ as 
$g=G_{0}^{2}+G_{1}^{2}f$, 
$h=H_{0}^{2}+H_{1}^{2}f$, and
$g=g_{0}^{2}+g_{1}^{2}h$.
Then we have
\begin{equation} \label{eq:Agf}
A(g,f) = \left(\frac{dG_0}{df}\right)^2
+ \left(\frac{dG_1}{df}\right)^2 f
+ \frac{dG_1}{df} G_1,
\end{equation}
$$
(dh/df)^2 A(g,h) =
\left(\frac{dg_0}{df}\right)^2
+ \left(\frac{dg_1}{df} \right)^2 h
+ \frac{dg_1}{df} g_1 H_1^2,
$$
and
$$
\frac{dg}{dh} A(h,f)
=\left(\frac{dH_0}{df}\right)^2 g_1^2
+ \left(\frac{dH_1}{df}\right)^2 g_1^2 f
+ \frac{dH_1}{df} g_1^2 H_1.
$$
Hence, by applying the equalities 
$G_0 = g_0 + g_1 H_0$
and $G_1 = g_1 H_1$ to \eqref{eq:Agf},
we obtain the desired equality.
\end{proof}
Proposition \ref{prop:1-cocycle} 
provides us with some more properties of $a(f,g)$.
\begin{prop} \label{prop:a(f,g)_basic}
For $f,g\in \mathcal{H}$ and $\gamma,\delta \in \Gamma$, 
the followings are true:
\begin{enumerate}
\item $a(f,g)=a(g,f)$.
\item $a(f,g)=a(f,1/g)$.
\item For $s\in k(X)^{\times},t\in k(X)$, we have $a(f,g)=a(f,s^{4}g+t^{4})$.
\item $a(f,g)=a(\gamma f,\delta g)$.
\end{enumerate}
\end{prop}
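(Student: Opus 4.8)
The plan is to derive all four identities from the cocycle relation of Proposition~\ref{prop:1-cocycle}, together with a handful of short, explicit evaluations of $a(\cdot,\cdot)$. The mechanism I would use repeatedly is the following: for $g,h\in\mathcal H$, feeding $f,g,h$ into $a(f,g)+a(g,h)+a(h,f)=0$ and using that $k$ has characteristic two, one sees that whenever $a(g,h)=0$ one gets $a(f,g)=a(h,f)$. Thus the whole proposition reduces to the symmetry statement (1) and the vanishing of $a(g,h)$ for a few well-chosen $h$. In each case I would compute $a(g,h)$ straight from Definition~\ref{df:a(f,g)} by first writing the decomposition of $g$ in $k(X)^{2}\oplus k(X)^{2}h$, and I note that $a(g,h)$ carries $A(g,h)$ as a factor of its numerator, so $A(g,h)=0$ already forces $a(g,h)=0$.

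First I would prove (1) by taking $h=f$. The decomposition of $f$ with respect to itself is $f=0^{2}+1^{2}\cdot f$, so $F_{0}=0$ and $F_{1}=1$ are constants, whence $A(f,f)=0$ and $a(f,f)=0$; the cocycle then gives $a(f,g)+a(g,f)=0$, i.e.\ $a(f,g)=a(g,f)$. For (2) I take $h=1/g$: here $g=0^{2}+g^{2}\cdot(1/g)$, so $F_{0}=0$ and $F_{1}=g$, and with $dF_{1}/d(1/g)=g^{2}$ one computes $A(g,1/g)=(g^{2})^{2}\cdot(1/g)+g^{2}\cdot g=g^{3}+g^{3}=0$, the vanishing being exactly the characteristic-two cancellation. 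With (1) in hand, the cocycle applied to $f,g,1/g$ gives $a(f,g)=a(f,1/g)$. For (3) I take $h=s^{4}g+t^{4}$; solving $g=F_{0}^{2}+F_{1}^{2}h$ forces $F_{1}=s^{-2}$ and $F_{0}=t^{2}s^{-2}$, which are both squares in $k(X)$, so $dF_{0}=dF_{1}=0$ and hence $A(g,s^{4}g+t^{4})=0$. The cocycle applied to $f,g,s^{4}g+t^{4}$, together with (1), then yields $a(f,g)=a(f,s^{4}g+t^{4})$.

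Finally, for (4) I would argue by group generation. Properties (2) and (3) say precisely that the second-slot value is unchanged when $g$ is replaced by $1/g$ or by $s^{4}g+t^{4}$; these transformations are the Weyl element and the Borel part of $PGL_{2}(k(X)^{4})$, and since $k(X)^{4}=\{s^{4}\mid s\in k(X)\}$ they generate all of $\Gamma$. A short check shows that $S=\{\delta\in\Gamma\mid a(f,\delta g)=a(f,g)\ \text{for all }f,g\in\mathcal H\}$ is a subgroup of $\Gamma$, so it equals $\Gamma$, and therefore $a(f,\delta g)=a(f,g)$ for every $\delta\in\Gamma$. Combining this with the symmetry (1) through $a(\gamma f,g)=a(g,\gamma f)=a(g,f)=a(f,g)$ transfers the invariance to the first slot, and then $a(\gamma f,\delta g)=a(\gamma f,g)=a(f,g)$.

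The evaluations of the decompositions and of $A$ are all one-line computations, and the only place where a characteristic-two cancellation is genuinely needed inside $A$ is in (2). I therefore expect the main obstacle to be the organization of step (4): one must verify that second-slot invariance under the two generating families propagates to all of $\Gamma$ (the subgroup argument for $S$) and that invariance in one slot, combined with the symmetry (1), forces invariance in the other. None of this is deep, so the real content of the argument is careful bookkeeping rather than any hard estimate.
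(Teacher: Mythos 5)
Your proposal is correct and follows essentially the same route as the paper: both derive (1)--(3) from the cocycle relation together with the explicit vanishings $a(f,f)=a(g,1/g)=a(g,s^4g+t^4)=0$, and both reduce (4) to second-slot invariance via (1). The only cosmetic difference is in (4), where the paper writes out the explicit identity $\frac{a^4g+b^4}{c^4g+d^4}=(a/c)^4+\frac{(ad-bc)^4}{c^8g+d^8}$ and applies (2) and (3) directly, whereas you package the same decomposition as a generation-plus-subgroup argument; the underlying idea is identical.
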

\begin{proof}
Since $a(f,f)=0$, Proposition \ref{prop:1-cocycle} implies the equality (1).
Since $1/g= 0^{2}+(1/g)^{2}g$, we have $A(1/g,g)=0$.
Hence $a(1/g,g)=0$.
Thus by Proposition \ref{prop:1-cocycle}, we have the equality (2).
Since $s\in k(X)^{\times}$ and $g\in \mathcal{H}$, $s^{4}g+t^{4}\in \mathcal{H}$ and $a(s^{4}g+t^{4},g)=0$. Therefore we obtain the equality (3) 
from Proposition \ref{prop:1-cocycle}.
We prove (4). By Proposition 2.9.(1), it suffices to show that $a(f,g)=a(f,\gamma g)$. \\ Set $\gamma = \left[ \begin{array}{cc} a^{4}&b^{4}\\ c^{4}&d^{4} \end{array} \right] \in \Gamma =PGL_{2}(k(X)^{4})$. If $c=0$, then the assertion follows from the assertion (3). Let us assume $c\neq0$. Then we have
\begin{eqnarray*}
  a(f,\gamma g)&=&a\left(f,\frac{a^{4}g+b^{4}}{c^{4}g+d^{4}}\right) \\ 
&=&a\left(f,(a/c)^{4}+\frac{(ad-bc)^{4}}{c^{8}g+d^{8}}\right) \\
&=&a(f,(c^{8}g+d^{8})^{-1}) \\
&=&a(f,(c^2)^{4}g+(d^2)^{4}) \\ 
&=&a(f,g).
\end{eqnarray*}
This completes the proof.
\end{proof}
\begin{prop}\label{prop:a(f,g)_Gamma}
For $f,g\in \mathcal{H}$, the following two conditions are equivalent.
\begin{enumerate}
\item $a(f,g)=0 \in k(X)/k(X)^2$.
\item There exists $\gamma \in \Gamma$ such that $f=\gamma g$.
\end{enumerate}
\end{prop}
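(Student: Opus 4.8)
The plan is to reduce both conditions of the proposition to a single scalar identity among the coefficients appearing in Definition \ref{df:a(f,g)}. Write $f = f_0^4 + f_1^4 g + f_2^4 g^2 + f_3^4 g^3$ and put $B := f_1 f_3 + f_2^2 \in k(X)$. The first step is the observation that, in characteristic two,
\[
A(f,g) = f_1^2 f_3^2 + f_2^4 = (f_1 f_3 + f_2^2)^2 = B^2
\]
is a perfect square, while $df/dg = F_1^2$ with $F_1 = f_1^2 + f_3^2 g$; here $F_1 \neq 0$ because $f \in \mathcal{H}$ means $f \notin k(X)^2$. Consequently
\[
a(f,g) = \frac{A(f,g)\,g}{df/dg} \bmod k(X)^2 = \Bigl(\frac{B}{F_1}\Bigr)^2 g \bmod k(X)^2 .
\]
Since $k(X) = k(X)^2 \oplus k(X)^2 g$ and $g \notin k(X)^2$, a nonzero element of $k(X)^2 g$ never lies in $k(X)^2$; hence $a(f,g) = 0$ if and only if $B = 0$, i.e.\ $f_1 f_3 = f_2^2$.

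The implication (2) $\Rightarrow$ (1) is then immediate: if $f = \gamma g$, then Proposition \ref{prop:a(f,g)_basic}(4) gives $a(f,g) = a(\gamma g, g) = a(g,g) = 0$. Hence the substantive content is (1) $\Rightarrow$ (2), and by the first step it suffices to construct, under the assumption $B = 0$, an explicit $\gamma \in \Gamma$ with $f = \gamma g$.

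For this construction, write $\gamma = \left[\begin{smallmatrix} a^4 & b^4 \\ c^4 & d^4 \end{smallmatrix}\right]$, so that the equation $f = \gamma g$ amounts to $f\,(c^4 g + d^4) = a^4 g + b^4$ once $c^4 g + d^4 \neq 0$. If $f_3 = 0$, then $f_2^2 = f_1 f_3 = 0$ forces $f_2 = 0$, so $f = f_0^4 + f_1^4 g$ with $f_1 \neq 0$ and $\gamma = \left[\begin{smallmatrix} f_1^4 & f_0^4 \\ 0 & 1 \end{smallmatrix}\right]$ works. If $f_3 \neq 0$, I would set
\[
c = f_3,\quad d = f_2,\quad a = f_1 f_2 + f_0 f_3,\quad b = f_0 f_2 + f_3^2 g,
\]
where the identity $x^4 + y^4 = (x+y)^4$ guarantees that the entries $a^4, b^4, c^4, d^4$ are genuine fourth powers, so that $\gamma \in PGL_2(k(X)^4)$. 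Expanding $f\,(c^4 g + d^4)$ in the $k(X)^4$-basis $\{1, g, g^2, g^3\}$ (reducing $g^4 \in k(X)^4$) and comparing components, one checks that the $g^3$-coefficient vanishes automatically, the $g^2$-coefficient vanishes precisely because $B = 0$, and the $g^1$- and $g^0$-coefficients reproduce $a^4$ and $b^4$; this verifies $f\,(c^4 g + d^4) = a^4 g + b^4$.

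The main obstacle is the non-degeneracy of the constructed $\gamma$, i.e.\ that $ad - bc \neq 0$. A direct computation using $B = 0$ gives $ad - bc = f_3\,(f_1^2 + f_3^2 g)$, which is nonzero exactly because $f_3 \neq 0$ and $f_1^2 + f_3^2 g \neq 0$ — the latter again using $g \notin k(X)^2$. Thus the only genuine work is this verification together with the case split on whether $f_3$ vanishes; the conceptual simplification that makes everything routine is the factorization $A(f,g) = B^2$, which collapses the a priori delicate vanishing of the class $a(f,g) \in k(X)/k(X)^2$ to the single scalar equation $f_1 f_3 = f_2^2$ that simultaneously controls membership of $f$ in the $\Gamma$-orbit of $g$.
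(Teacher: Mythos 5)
Your proof is correct and follows essentially the same route as the paper: the same reduction of $a(f,g)=0$ to $A(f,g)=(f_1f_3+f_2^2)^2=0$, the same case split on $f_3$, and your matrix $\gamma$ coincides with the paper's after dividing every entry by $f_3^4$ (with matching determinant $f_3^{-4}(f_3^2g+f_1^2)^4$ up to that rescaling). The only difference is presentational: you derive $\gamma$ by solving $f(c^4g+d^4)=a^4g+b^4$ coefficient by coefficient, whereas the paper simply exhibits it.
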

\begin{proof}
If $f = \gamma g$ for some $\gamma \in \Gamma$, then
by (4) of Proposition \ref{prop:a(f,g)_basic}
we have $a(f,g)=a(f,\gamma^{-1} f)=a(f,f)=0$.
This prove that (2) implies (1).
It remains to prove (1) implies (2). 
Note that $a(f,g):=A(f,g)g/(df/dg)\mod\, k(X)^2=0\in  k(X)/k(X)^2 $ if and only if $A(f,g)=0\in  k(X)$. 
Let us write $f=f_{0}^{4}+f_{1}^{4}g+f_{2}^{4}g^{2}+f_{3}^{4}g^{3}$. 
Then we have $f_1 f_3 + f_2^2 = 0$.
Recall that $f\in \mathcal{H}$ implies $(f_1,f_3)\neq(0,0)$.
First suppose $f_3=0$. 
Then $f_1\neq0$ and $f_2=0$. 
Thus $f=f_{0}^{4}+f_{1}^{4}g\ (f_1\neq0)$. In particular, 
$f$ belongs to the same $\Gamma$-orbit of $g$. 
Next suppose $f_3\neq0$. 
In this case, we can find $\gamma$ as
$$ 
\gamma = \left[ \begin{array}{cc} 
(f_0+f_2^3/f_3^2)^4&(f_3g+f_0f_2/f_3)^4\\ 
1&(f_2/f_3)^{4} 
\end{array} \right] \in \Gamma.
$$
Here we have $\gamma \in \Gamma$
since $\det(\gamma)=f_3^{-4}(f_3^2g+f_1^2)^4\neq0$.
\end{proof}
The next theorem relates the element $a(f,g)$ 
and the notion of pseudo-tameness.
\begin{thm} \label{thm:a(f,g)_p-tame}
For $f,g\in \mathcal{H}$ with $g$ is pseudo-tame at a closed point $x\in X$, the following two conditions are equivalent:
\begin{enumerate}
\item $a(f,g)$ is regular at $x$, or equivalently, the differential form $A(f,g)dg/(df/dg)$ associated with $a(f,g)$ is regular at $x$.
\item $f$ is pseudo-tame at $x$.
\end{enumerate}
\end{thm}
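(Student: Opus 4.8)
The plan is to reduce to the case where $g$ is a uniformizer at $x$ and then translate both conditions into inequalities among the valuations of the coefficients appearing in the expansion of $f$ with respect to $g$. Since $g$ is pseudo-tame at $x$, Remark \ref{rem:p-tame_characterization} provides $\gamma\in\Gamma$ with $v_x(\gamma g)=1$; by Proposition \ref{prop:a(f,g)_basic}(4) we have $a(f,g)=a(f,\gamma g)$, and the pseudo-tameness of $f$ at $x$ depends only on the $\Gamma$-orbit of $f$ (again by Remark \ref{rem:p-tame_characterization}). Hence, replacing $g$ by $\gamma g$, I may assume $v_x(g)=1$, so that $g$ is a uniformizer at $x$ and $v_x(dg)=0$.

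With $g$ a uniformizer, I would expand each coefficient as $f_i=\sum_l a_{i,l}g^l$ with $a_{i,l}\in k$ and substitute into $f=f_0^4+f_1^4g+f_2^4g^2+f_3^4g^3$. Since $f_i^4=\sum_l a_{i,l}^4g^{4l}$, this recovers the Laurent expansion of $f$ at $x$, in which the coefficient of $g^{4l+i}$ equals $a_{i,l}^4$; thus the terms of $f$ of degree $\equiv i\pmod 4$ are governed by $f_i$, and $\alpha_i:=v_x(f_i)=\min\{l:a_{i,l}\neq 0\}$. Next I would compute $m:=\ord_x(df)=v_x(df/dg)$. As $df/dg=f_1^4+f_3^4g^2$ with $v_x(f_1^4)=4\alpha_1\equiv 0$ and $v_x(f_3^4g^2)=4\alpha_3+2\equiv 2\pmod 4$, there is no cancellation and $m=\min(4\alpha_1,\,4\alpha_3+2)$, which is even. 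By Remark \ref{rem:expansion}, $f$ is pseudo-tame at $x$ iff every term of $f$ of degree $<m+1$ has degree divisible by $4$. The terms of odd degree below $m+1$ vanish automatically because $m+1$ is the least odd order occurring in $df$, so the only possible obstruction comes from the degree-$(\equiv 2)$ terms; hence pseudo-tameness is equivalent to the single inequality $4\alpha_2+2>m$.

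For condition (1) I would record the identities $A(f,g)=(f_1f_3+f_2^2)^2$ and $df/dg=(f_1^2+f_3^2g)^2$, so that the differential form $A(f,g)\,dg/(df/dg)$ attached to $a(f,g)$ is regular at $x$ iff $v_x(A(f,g))\ge m$, i.e.\ $v_x(f_1f_3+f_2^2)\ge m/2$ (using $v_x(dg)=0$). I would then compare the two inequalities by splitting into the cases $4\alpha_1\le 4\alpha_3+2$, where $m=4\alpha_1$ and $\alpha_1\le\alpha_3$, and $4\alpha_1>4\alpha_3+2$, where $m=4\alpha_3+2$. In the first case both conditions reduce to $\alpha_2\ge\alpha_1$, and in the second to $\alpha_2\ge\alpha_3+1$; in either case this is precisely $4\alpha_2+2>m$, which finishes the equivalence. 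The degenerate cases $f_1=0$, $f_2=0$, or $f_3=0$ fit the same computation under the convention $\alpha_i=\infty$, using that $f\in\cH$ forces $(f_1,f_3)\neq(0,0)$.

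The main obstacle I anticipate is the possible cancellation of leading terms in $f_1f_3+f_2^2$, which can occur only when $\alpha_1+\alpha_3=2\alpha_2$ and which could a priori raise $v_x(f_1f_3+f_2^2)$ above $\min(\alpha_1+\alpha_3,2\alpha_2)$. I expect to dispose of this as follows: in the regime where condition (1) is meant to fail, one checks that $f_2^2$ has order strictly smaller than $f_1f_3$, so no cancellation is possible and $v_x(f_1f_3+f_2^2)=2\alpha_2$ exactly; in the complementary regime only the always-valid lower bound $v_x(f_1f_3+f_2^2)\ge\min(\alpha_1+\alpha_3,2\alpha_2)$ is needed, and cancellation can only help. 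Thus the cancellation phenomenon never disturbs the equivalence, and the case analysis yields the theorem.
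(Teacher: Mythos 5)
Your argument is correct, and it reaches the conclusion by a route that differs from the paper's in one substantive respect. Both proofs begin with the same reduction, using the $\Gamma$-invariance of $a(f,g)$ to replace $g$ by a uniformizer at $x$. After that, the paper also normalizes $f$: it invokes Remark \ref{rem:p-tame_characterization} (every $f\in\cH$ can be moved by some $\gamma\in\Gamma$ so that $v_x(\gamma f)\in\{1,2\}$, with pseudo-tameness corresponding to the value $1$) and then checks the two normal forms $v_x(f)=1$ and $v_x(f)=2$ directly, where the orders of $A(f,g)$, $dg$ and $df/dg$ can each be read off in a line or two. You instead keep $f$ un-normalized, use the Laurent-expansion criterion of Remark \ref{rem:expansion}, and translate both conditions into inequalities in $\alpha_i=v_x(f_i)$: pseudo-tameness becomes $4\alpha_2+2>m$ with $m=\min(4\alpha_1,4\alpha_3+2)$, and regularity of the form becomes $v_x(f_1f_3+f_2^2)\ge m/2$; your two-case comparison and your treatment of the possible cancellation in $f_1f_3+f_2^2$ (no cancellation occurs precisely in the regime where the conditions are meant to fail, since there $f_2^2$ strictly dominates) are both sound, and the degenerate cases are handled correctly by the convention $\alpha_i=\infty$ together with $(f_1,f_3)\neq(0,0)$. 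The paper's normalization buys brevity and hides the case analysis inside the $\Gamma$-action; your computation is longer but makes the numerical content of the equivalence explicit and does not need the ``not pseudo-tame iff $v_x(\gamma f)=2$ for some $\gamma$'' half of Remark \ref{rem:p-tame_characterization}.
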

\begin{proof}
Recall that $f\in \mathcal{H}$ is pseudo-tame at $x\in X$ 
if and only if there exists $\gamma \in \Gamma$ 
satisfying $v_{x}(\gamma f)=1$,
and that $f\in \mathcal{H}$ is not pseudo-tame at $x\in X$ 
if and only if there exists $\gamma \in \Gamma$ 
satisfying $v_{x}(\gamma f)=2$.
Since $a(f,g)$ is $\Gamma$-invariant by (4) of Proposition \ref{prop:a(f,g)_basic}, we may assume that $g$ is an uniformizer at $x$.

We prove that (2) implies (1).
Since $f$ is pseudo-tame at $x$,
we can assume $f$ is an uniformizer at $x$.
Writing $f$ as $f=F_0^2+F_1^2g$,
we have $F_1\in \mathcal{O}_{X,x}^{\times}$ 
and $F_0\in {\mathfrak{m}}_{X,x}$.
Then our task is to prove that $A(f,g)dg/(df/dg)$ is regular at $x$.
This follows from 
the fact that $A(f,g)=(\frac{dF_{0}}{dg})^{2}+(\frac{dF_{1}}{dg})^{2}g+(\frac{dF_{1}}{dg})F_{1}$ and $dg\in \Omega_{X}$ are regular at $x$ 
and that $df/dg=F_1^2\in \mathcal{O}_{X,x}^{\times}$.

We prove that (1) implies (2).
Suppose $f$ is not pseudo-tame at $x$. By replacing $f$ with $\gamma f$
for some suitable $\gamma \in \Gamma$, we may assume $v_{x}(f)=2$.
Write $f$ as 
$f=F_0^2+F_1^2g=f_{0}^{4}+f_{1}^{4}g+f_{2}^{4}g^{2}+f_{3}^{4}g^{3}$. 
Then we have $F_0=f_0^2+f_2^2g\in {\mathfrak{m}}_{X,x}\setminus {\mathfrak{m}}_{X,x}^2$ and $F_1=f_1^2+f_3^2g\in {\mathfrak{m}}_{X,x}$.
This implies that $f_2\in \mathcal{O}_{X,x}^{\times}$ 
and $f_1\in {\mathfrak{m}}_{X,x}$.
Thus we have $A(f,g)=f_1^2f_3^2+f_2^4\in  \mathcal{O}_{X,x}^{\times}$ 
and $df/dg=F_1^2\in {\mathfrak{m}}_{X,x}^2$.
As a result,  $A(f,g)dg/(df/dg)$ is not regular at $x$. 
which contradicts (2).
\end{proof}

\begin{cor} \label{cor:p-tame}
For $f\in \mathcal{H}$ and a closed point $x\in X$, the following
conditions are equivalent:
\begin{enumerate}
\item $f$ is pseudo-tame at $x$.
\item There exists $\gamma \in \Gamma$ such that $\gamma f$ is pseudo-tame at $x$.
\item For any $\gamma \in \Gamma,\ \gamma f$ is pseudo-tame at $x$. 
\end{enumerate}
\end{cor}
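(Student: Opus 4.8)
The plan is to deduce the corollary from the group-theoretic reformulation of pseudo-tameness recorded in Remark~\ref{rem:p-tame_characterization}, using only the fact that $\Gamma$ acts on $\mathcal{H}$ as a group. Recall from that remark that $f\in\mathcal{H}$ is pseudo-tame at $x$ if and only if there is some $\delta\in\Gamma$ with $v_{x}(\delta f)=1$. The key observation I would make is that this existence statement is insensitive to replacing $f$ by $\gamma f$ for a fixed $\gamma\in\Gamma$: since right multiplication by $\gamma$ is a bijection of the group $\Gamma$, one has $\{\delta(\gamma f)\mid\delta\in\Gamma\}=\{\varepsilon f\mid\varepsilon\in\Gamma\}$, so the $\Gamma$-orbits of $f$ and of $\gamma f$ carry exactly the same set of valuations $v_{x}$.

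Concretely, I would fix $\gamma\in\Gamma$ and observe that $\gamma f$ is pseudo-tame at $x$ if and only if there is $\delta\in\Gamma$ with $v_{x}(\delta\gamma f)=1$, which, after setting $\varepsilon=\delta\gamma$, holds if and only if there is $\varepsilon\in\Gamma$ with $v_{x}(\varepsilon f)=1$, i.e.\ if and only if $f$ is pseudo-tame at $x$. This single equivalence, valid for every $\gamma\in\Gamma$, yields all three implications at once: taking $\gamma$ arbitrary gives $(1)\Rightarrow(3)$; the implication $(3)\Rightarrow(2)$ is immediate; and applying the equivalence to the particular $\gamma$ furnished by $(2)$ gives $(2)\Rightarrow(1)$.

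As an alternative that stays closer to the surrounding results, I would instead invoke Theorem~\ref{thm:a(f,g)_p-tame} together with the $\Gamma$-invariance in Proposition~\ref{prop:a(f,g)_basic}(4). If $f$ is pseudo-tame at $x$, then one may use $g=f$ as the reference pseudo-tame function in Theorem~\ref{thm:a(f,g)_p-tame}; since $a(\gamma f,f)=a(f,f)=0$ is regular at $x$, the theorem forces $\gamma f$ to be pseudo-tame at $x$. This gives $(1)\Rightarrow(3)$, and the remaining implications follow formally as before by replacing $f$ with $\gamma f$ and $\gamma$ with $\gamma^{-1}$.

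I do not expect a genuine obstacle here: the statement is a formal consequence of material already in hand, and the only point requiring care is the bookkeeping of the (left) $\Gamma$-action, namely that $(\delta\gamma)f=\delta(\gamma f)$ and that $\varepsilon\mapsto\varepsilon\gamma^{-1}$ is a bijection of $\Gamma$. This is precisely what makes the valuation condition orbit-theoretic, and hence independent of $\gamma$.
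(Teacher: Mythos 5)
Your proposal is correct. Your primary argument is a genuinely different (and more elementary) route than the paper's: you reduce the corollary to Remark~\ref{rem:p-tame_characterization}, observing that the condition ``there exists $\delta\in\Gamma$ with $v_x(\delta f)=1$'' depends only on the $\Gamma$-orbit of $f$, since $\delta(\gamma f)=(\delta\gamma)f$ and $\delta\mapsto\delta\gamma$ is a bijection of $\Gamma$. This is a purely formal orbit argument, and it is not circular: the remark is established independently (from Remark~\ref{rem:expansion}), and the paper itself already relies on it in the proof of Theorem~\ref{thm:a(f,g)_p-tame}, so you are on the same footing as the authors. The paper instead proves the corollary via the $a(f,g)$ machinery: for $(2)\Rightarrow(1)$ it notes $a(f,\gamma f)=0$ by Proposition~\ref{prop:a(f,g)_Gamma}, hence regular at $x$, so Theorem~\ref{thm:a(f,g)_p-tame} (with the pseudo-tame $\gamma f$ as reference) gives pseudo-tameness of $f$; for $(1)\Rightarrow(3)$ it argues symmetrically with $a(\gamma f,f)=0$. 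Your ``alternative'' second argument is precisely this, so you have in effect reproduced the paper's proof as well. What each approach buys: yours makes transparent that the corollary is a formal consequence of the orbit characterization alone, with no appeal to the cocycle $a(f,g)$; the paper's version keeps everything within the $a(f,g)$ framework that drives Section~3, which is perhaps why the authors chose it. The only bookkeeping point worth stating explicitly in your first argument is that $\gamma f$ remains in $\mathcal{H}$ (so that the remark applies to it), which follows from $d(\gamma f)=\det(\gamma)\,df/(c^4f+d^4)^2\neq 0$.
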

\begin{proof}
 The implication (3)$\Rightarrow$(2) is clear. We prove (2) implies (1). 
Take some $\gamma \in \Gamma$ such that $\gamma f$ is pseudo-tame at $x$.
Proposition \ref{prop:a(f,g)_Gamma} implies $a(f,\gamma f)=0$. 
Since $a(f,\gamma f)=0$ is regular at $x$, 
Theorem \ref{thm:a(f,g)_p-tame} shows  $f$ is pseudo-tame at $x$.
It remains to prove (1) implies (3). 
Take any $\gamma \in \Gamma$. 
By Proposition \ref{prop:a(f,g)_Gamma} we have $a(\gamma f,f)=0$. 
In particular $a(\gamma f,f)=0$ is regular at $x$. 
Hence it follows from Theorem \ref{thm:a(f,g)_p-tame}
that $\gamma f$ is pseudo-tame at $x$.
\end{proof}

\section{Existence of a pseudo-tame rational function}\label{sec:beta}

\subsection{Construction of an obstruction class $\beta(X)$}
The aim of this paragraph is to introduce, for a curve $X$, a
cohomology class $\beta(X) \in H^1(X,B_{X})$.

First we remark that, for any curve $X$, one can find an
open covering $\cU=(U_{i})_{i\in I}$ of $X$ 
satisfying the following conditions:
\begin{enumerate}
\item For any $i\in I$, $U_i$ is not empty.
\item For any $i\in I$, there exists $f_i \in k(X)$
that is pseudo-tame on $U_i$.
\end{enumerate}
For example, we may take $I$ as the set of closed points of $X$, and
$\cU$ as follows: for each closed point $x\in X$, 
take an uniformizer $t_x$ at $x$ and set $U_x$:=$X\setminus{\text{ramification points of }t_x}$. We also note that, if $\cU=(U_{i})_{i\in I}$ satisfies
the two conditions above, then $U_i$ is affine for any $i \in I$.
This implies that, 
for any quasi-coherent $\cO_X^2$-module, its cohomology groups
coincide with the corresponding \v{C}ech cohomology groups 
with respect to $\cU$.

\begin{df}\label{df:beta}
For a curve $X$, take pair $(\cU,(f_i)_{i\in I})$ of an open covering
$\cU=(U_{i})_{i\in I}$ of $X$ and a family $(f_i)_{i\in I}$ of
elements of $k(X)$ with the same index as $\cU$
satisfying the conditions (1) and (2) above.
We set $\beta_{i,j}:=a(f_i,f_j)$.
Theorem \ref{thm:a(f,g)_p-tame} implies $\beta_{i,j}\in H^0(U_i \cap U_j, B_X)$.%
Thus Proposition \ref{prop:1-cocycle} tells us
that a family $(\beta_{i,j})$ defines an element of $H^1(X,B_X)$. 
We denote this element by $\beta(X,\cU,(f_i))\in H^1(X,B_X)$.
\end{df}

\begin{prop}\label{prop: well-def beta}
The element $\beta(X,\cU,(f_i))\in H^1(X,B_X)$ is independent of the choices of $\cU$ and $(f_i)$. Therefore we denote it by $\beta(X)$.
\end{prop}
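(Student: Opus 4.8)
The plan is to show that $\beta(X,\cU,(f_i))$ does not depend on the two choices by exhibiting any two admissible data as refinements of a common one, and then verifying invariance under refinement together with invariance under changing the rational functions. Since all the open sets involved are affine and the cohomology is computed by \v{C}ech cohomology with respect to such coverings, it suffices to compare the associated \v{C}ech $1$-cocycles up to coboundaries.

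First I would reduce to two separate statements: (a) if we fix the covering $\cU$ and replace the family $(f_i)$ by another family $(f_i')$ with each $f_i'$ pseudo-tame on $U_i$, then the classes agree; and (b) if we pass to a refinement $\cV=(V_j)_{j\in J}$ of $\cU$ (with a refinement map $\sigma\colon J\to I$) and set $g_j:=f_{\sigma(j)}$, then the two classes agree under the canonical identification of \v{C}ech cohomologies. Statement (b) is the standard fact that \v{C}ech cohomology classes are preserved under refinement, so the substance is in (a). Given (a) and (b), two arbitrary admissible pairs $(\cU,(f_i))$ and $(\cU',(f_i'))$ can be compared through the common refinement $\cU\cap\cU'=(U_i\cap U_j')_{(i,j)}$: on this refinement both families induce pseudo-tame functions on each member, and (a) lets me replace one induced family by the other.

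For statement (a) the key computation is the following. On $U_i$ both $f_i$ and $f_i'$ are pseudo-tame, so by Theorem~\ref{thm:a(f,g)_p-tame} the element $a(f_i,f_i')\in k(X)/k(X)^2$ is regular at every closed point of $U_i$, i.e.\ $\gamma_i:=a(f_i,f_i')\in H^0(U_i,B_X)$. I claim that $(\gamma_i)_i$ is precisely a \v{C}ech $0$-cochain whose coboundary is the difference of the two $1$-cocycles. Indeed, the cocycle condition of Proposition~\ref{prop:1-cocycle} applied to the triple $(f_i,f_j,f_j')$ and then to $(f_i,f_i',f_j')$ yields, after rearranging and using $a(f,g)=a(g,f)$ from Proposition~\ref{prop:a(f,g)_basic}(1), the identity
$$
a(f_i',f_j') - a(f_i,f_j) = a(f_j,f_j') - a(f_i,f_i') = \gamma_j - \gamma_i
\in k(X)/k(X)^2
$$
on $U_i\cap U_j$. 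Thus the two $1$-cocycles differ by the \v{C}ech coboundary $\delta(\gamma_i)$, and since each $\gamma_i$ lies in $H^0(U_i,B_X)$ this is a genuine coboundary in the \v{C}ech complex, so the cohomology classes coincide.

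The main obstacle I anticipate is bookkeeping rather than conceptual: I must confirm that the cocycle identity above really follows from Proposition~\ref{prop:1-cocycle} applied to the right triples (one needs two applications of the three-term relation and the symmetry of $a$), and I must be careful that all the intermediate elements such as $a(f_i,f_j')$ are indeed regular on the relevant overlaps so that the manipulations take place inside the \v{C}ech complex of $B_X$ and not merely in $k(X)/k(X)^2$. Regularity of these mixed terms is not automatic from the hypotheses, so the cleanest route is to carry out the algebraic identity entirely in $k(X)/k(X)^2$ first, and only afterwards observe that the difference $a(f_i',f_j')-a(f_i,f_j)$, being $\gamma_j-\gamma_i$ with $\gamma_i,\gamma_j$ regular on $U_i,U_j$, automatically gives a coboundary relation of $0$-cochains in $H^0(U_i,B_X)$. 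This ordering sidesteps any need to control the regularity of the auxiliary terms.
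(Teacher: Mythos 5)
Your proposal is correct and follows essentially the same route as the paper: reduce to (i) invariance under change of the family $(f_i)$ for a fixed covering, proved by exhibiting the difference of cocycles as the coboundary of the $0$-cochain $\bigl(a(f_i,f_i')\bigr)_i$, which lies in the \v{C}ech complex by Theorem \ref{thm:a(f,g)_p-tame}, and (ii) invariance under refinement plus passage to a common refinement. The only cosmetic difference is the order in which you apply the three-term relation of Proposition \ref{prop:1-cocycle}; the resulting identity is the same (and signs are irrelevant in characteristic two).
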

\begin{proof}
First we prove that, for any fixed open covering 
$\cU=(U_i)_{i \in I}$ of $X$ satisfying the
conditions (1) and (2) above, 
the cohomology $\beta(X,\cU,(f_i))$ is independent of the choice
of $(f_i)$.
Let us take any two families $(f_i)$ and $(g_i)$ such that 
$f_i,g_i\in k(X)$ are pseudo-tame on $U_i$ for each $i\in I$.
We will check that the two families $(a(f_i,f_j))_{i,j}$ 
and $(a(g_i,g_j))_{i,j}$ 
give the same element of $H^1(\cU,B_{X})$. 
Set $H_i=a(f_i,g_i)\in k(X)/k(X)^2$ for each $i$. 
Since $f_i,g_i$ are pseudo-tame on $U_i$, 
it follows from Theorem \ref{thm:a(f,g)_p-tame} that
$H_i$ belongs to $H^0(U_i,B_{X})$.
By Proposition \ref{prop:1-cocycle} we have
$a(f_i,f_j)=H_i+a(g_i,f_j)$ and $a(g_i,g_j)=a(g_i,f_j)+H_j$.
Hence we have $a(f_i,f_j)-a(g_i,g_j)=H_i-H_j$, that implies that
the \v{C}ech $1$-cocycle $a(f_i,f_j)-a(g_i,g_j)$ is a 1-coboundary.
Therefore $(a(f_i,f_j))_{i,j}$ and $(a(g_i,g_j))_{i,j}$ 
define the same element of 
$H^1(X,B_{X})$.
Thus we may and will denote $\beta(X,\cU,(f_i))$ 
by $\beta(X,\cU)$.

It remains to show that $\beta(X,\cU)$ is independent of $\cU$.
Let $\cU=(U_i)_{i\in I}$ and $\cV=(V_j)_{j\in J}$ be two open
coverings of $X$ satisfying the conditions (1) and (2) above.
It suffices to show $\beta(X,\cU) = \beta(X,\cV)$.
First suppose that $\cV$ is a refinement of $\cU$.
Fix a map $\iota \colon J\to I$ such that $V_{j}\subset U_{\iota(j)}$. 
We choose a family $(f_i)_{i\in I}$ such that $f_i$ is pseudo-tame on $U_i$.
For each $j\in J$, we set $g_j=f_{\iota(j)}\mid_{V_{j}}$. 
Then the families $(a(f_{i_1},f_{i_2}))_{i_1,i_2\in I}$ 
and $(a(g_{j_1},g_{j_2}))_{j_1,j_2 \in J}$ define 
the elements $\beta(X,\cU)$ and $\beta(X,\cV)$, respectively.
Hence by the definition of $H^1(X,B_X)$, we have
$\beta(X,\cU) = \beta(X,\cV)$.
In general case, let us choose a common refinement $\cU'$ of $\cU$ and $\cV$.
Then we have $\beta(X,\cU)=\beta(X,\cU')=\beta(X,\cV)$, as desired.
\end{proof}

\subsection{$\beta(X)$ as an obstruction class}

Let $K$ be a field. Recall that $K$ is called quasi-algebraically closed if $K$ satisfies the following condition: if $F\in K[\bf{T}]$ is a homogeneous polynomial of $n$-variables with $\deg(f)<n$, then $F$ has a nontrivial root in $K^{\oplus n}$
Suppose moreover that $K$ is of transcendental degree one over an algebraically closed field. Then Tsen's theorem (cf. \cite[X, \S7]{Se}) states that $K$ is quasi-algebraically closed. In particular, for any curve $X$, its function field is quasi-algebraically closed.

\begin{lem}\label{lem:a(f,g)=a}
For any $f,a\in \mathcal{H}$, there exists $g\in \mathcal{H}$ such that $a(f,g)=a\in k(X)/k(X)^2$.
\end{lem}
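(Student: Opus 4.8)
\textbf{Proof plan for Lemma \ref{lem:a(f,g)=a}.}

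The plan is to reduce the existence of a suitable $g$ to solving a system of polynomial equations over $k(X)$ and then to invoke the quasi-algebraic closedness of $k(X)$ (Tsen's theorem) to produce a nontrivial solution. First I would fix $f\in\mathcal{H}$ and the target $a\in\mathcal{H}$, and write an unknown $g\in\mathcal{H}$ in terms of its expansion relative to $f$, say $g=G_0^2+G_1^2 f$ with $G_0,G_1\in k(X)$ to be found and $G_1\neq 0$ (so that $g\in\mathcal{H}$). By Definition \ref{df:a(f,g)} and the symmetry $a(f,g)=a(g,f)$ from Proposition \ref{prop:a(f,g)_basic}(1), the quantity $a(g,f)$ is expressed through $A(g,f)=(dG_0/df)^2+(dG_1/df)^2 f+(dG_1/df)G_1$ and $dg/df=G_1^2$, so the condition $a(f,g)=a$ becomes an explicit equation relating $G_0,G_1$ and their $f$-derivatives to a chosen representative of $a$ in $k(X)$.

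Next I would choose a convenient representative $\tilde a\in k(X)$ of the class $a\in k(X)/k(X)^2$ and translate the requirement $a(f,g)\equiv a \pmod{k(X)^2}$ into an honest equality in $k(X)$, absorbing the freedom modulo squares into the unknowns. Writing everything out over the basis $\{1,f\}$ of $k(X)$ as a $k(X)^2$-vector space (and using that derivation $d/df$ is $k(X)^2$-linear in this decomposition), the single relation $A(g,f)\cdot f = \tilde a\cdot (dg/df)$ up to a square separates into a finite list of polynomial conditions on finitely many ``coordinate'' unknowns obtained by further expanding $G_0,G_1$ in the two-step basis $\{1,f,f^2,f^3\}$ coming from $k(X)=k(X)^4\oplus\cdots\oplus k(X)^4 f^3$. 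The key structural point is that these are homogeneous (or can be homogenized) in the coordinate unknowns, with total degree strictly less than the number of unknowns, because the defining expression for $A$ is quadratic while the ambient $k(X)^2$- and $k(X)^4$-vector space dimensions give us enough slack variables.

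The main obstacle I anticipate is the bookkeeping needed to verify the degree-versus-number-of-variables inequality precisely, so that Tsen's theorem applies: I must count the unknowns introduced by expanding $G_0,G_1$ in a high enough Frobenius-power basis against the degree of the resulting homogeneous system, and confirm that enlarging the expansion (introducing more $k(X)^4$- or $k(X)^{16}$-coordinates) always wins this race. Once the inequality $\deg<n$ is secured, quasi-algebraic closedness of $k(X)$ yields a nontrivial root, and I would finally check that the corresponding $g$ lies in $\mathcal{H}$ rather than in $k(X)^2$ --- that is, that the nontrivial solution forces $G_1\neq 0$ (equivalently $dg\neq 0$); if a degenerate solution with $G_1=0$ arose, I would rule it out by noting it would force $a\in k(X)^2$, contrary to $a\in\mathcal{H}$, or by a small genericity adjustment of the representative $\tilde a$. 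This completes the construction of $g$ with $a(f,g)=a$.
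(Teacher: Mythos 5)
Your plan is correct and follows essentially the same route as the paper: write $g=g_0^4+g_1^4f+g_2^4f^2+g_3^4f^3$, translate $a(f,g)=a$ (with $a\equiv b^2f \bmod k(X)^2$) into a polynomial condition on the $g_i$, and apply Tsen's theorem to $k(X)$. The bookkeeping you worry about collapses completely: the condition is the single homogeneous quadratic $T_1T_3+T_2^2+bfT_3^2+bT_1^2=0$ in three variables $(g_1,g_2,g_3)$, so the inequality $\deg<n$ is immediate, and a nontrivial root automatically has $(g_1,g_3)\neq(0,0)$ since $g_1=g_3=0$ would force $g_2=0$.
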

\begin{proof}
Since $k(X)=k(X)^2\oplus k(X)^2f$, 
there exists a uniquely $b\in k(X)$ satisfying $a\equiv b^2f\mod\, k(X)^2$.
For any $g\in \mathcal{H}$, let us write $g$ as 
$g=g_{0}^{4}+g_{1}^{4}f+g_{2}^{4}f^{2}+g_{3}^{4}f^{3}$.
Recall that $a(f,g)=a(g,f)={(g_{1}^{2}g_{3}^{2}+g_{2}^{4})f}/{(g_{3}^{4}f^{2}+g_{1}^{4})}\mod\, k(X)^{2} \in k(X)/k(X)^{2}$. Thus it suffices to show that there exist $g_i\in k(X)$ such that $$(g_1,g_3)\neq(0,0) ,\ b=\frac{g_{1}g_{3}+g_{2}^{2}}{g_{3}^{2}f+g_{1}^{2}}.$$
Set $F(T_1,T_2,T_3)=T_1T_3+T_2^2+bfT_3^2+bT_1^2\in k(X)[T_1,T_2,T_3]$. Then $F$ is a homogeneous quadratic polynomial in three variables with coefficients in $k(X)$. Tsen's theorem implies $F$ has a nontrivial root in $k(X)^{\oplus 3}$. Then the root gives $(g_1,g_2,g_3)$ as desired.
\end{proof}
By the following theorem, one can regard the cohomology class $\beta(X)$ 
as an obstruction class 
for the existence of a pseudo-tame rational function.
\begin{thm}\label{thm: beta_obstruction}
Let $X$ be a curve. The followings are equivalent:
\begin{enumerate}
\item $\beta(X)=0\in H^1(X,B_X)$,
\item There exists a pseudo-tame rational function on $X$.
\end{enumerate}
\end{thm}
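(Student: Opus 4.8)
The plan is to prove the two implications separately, exploiting the fact that $\beta(X)$ is defined as a \v{C}ech cohomology class coming from the family $(a(f_i,f_j))$, and that Theorem \ref{thm:a(f,g)_p-tame} translates regularity of $a(f,g)$ into pseudo-tameness. First I would prove that (2) implies (1). Suppose $f\in\mathcal{H}$ is pseudo-tame on all of $X$. I would then form the trivial covering $\cU=(X)$ with the single function $f$; since $f$ is pseudo-tame on $X=U_i$, this $(\cU,(f))$ satisfies conditions (1) and (2), and by Proposition \ref{prop: well-def beta} the class $\beta(X)=\beta(X,\cU,(f))$ may be computed from it. But a covering with one open set has trivial $H^1$ (there are no overlaps $U_i\cap U_j$ with $i\neq j$ to produce nonzero $1$-cochains), so $\beta(X)=0$. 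Alternatively, and more robustly, for an arbitrary covering $(U_i)$ with functions $(g_i)$ I would use the globally pseudo-tame $f$ together with Theorem \ref{thm:a(f,g)_p-tame}: each $H_i:=a(g_i,f)$ is regular on $U_i$, hence lies in $H^0(U_i,B_X)$, and the cocycle identity of Proposition \ref{prop:1-cocycle} gives $a(g_i,g_j)=H_i-H_j$, exhibiting $\beta(X)$ as a coboundary.

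Next I would prove the harder direction, (1) implies (2). Assume $\beta(X)=0$. Fix any covering $\cU=(U_i)_{i\in I}$ and functions $(f_i)$ with $f_i$ pseudo-tame on $U_i$, so that $\beta_{i,j}=a(f_i,f_j)$ represents $\beta(X)=0$. Being a coboundary means there exist $H_i\in H^0(U_i,B_X)\subset k(X)/k(X)^2$ with
\begin{equation*}
a(f_i,f_j)=H_i-H_j\in k(X)/k(X)^2
\end{equation*}
for all $i,j$. The goal is to manufacture a single $g\in\mathcal{H}$ that is pseudo-tame at every closed point of $X$. The key idea is to use Lemma \ref{lem:a(f,g)=a}: after fixing a reference function $f:=f_{i_0}\in\mathcal{H}$ (and re-expressing everything via $f$ using the cocycle relation and Proposition \ref{prop:a(f,g)_basic}), I would seek $g\in\mathcal{H}$ with $a(f,g)$ equal to a prescribed class $a$, and then arrange that $a(f_i,g)$ is regular on each $U_i$ simultaneously.

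The main obstacle is precisely this globalization step: I need to choose one class $a\in k(X)/k(X)^2$ so that, for \emph{every} $i$, the function $g$ produced by Lemma \ref{lem:a(f,g)=a} satisfies that $a(f_i,g)$ is regular on $U_i$. Using the cocycle identity $a(f_i,g)=a(f_i,f)+a(f,g)$ together with the coboundary relation, I expect the right choice to be dictated by the $H_i$: concretely, after normalizing so that the reference function is $f$, I would set the target class $a$ to agree with the single globally-defined data encoded by $\{H_i\}$, so that $a(f_i,g)=a(f_i,f)+a$ becomes regular on $U_i$ for each $i$ by construction. Once $a(f_i,g)$ is regular on $U_i$ and $f_i$ is pseudo-tame on $U_i$, Theorem \ref{thm:a(f,g)_p-tame} forces $g$ to be pseudo-tame on $U_i$; since the $U_i$ cover $X$, the function $g$ is pseudo-tame everywhere, completing the proof. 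The delicate point is verifying that the $H_i$ truly assemble into a single global class usable as the input $a$ to Tsen's lemma, and that the resulting $g$ lies in $\mathcal{H}$ rather than in $k(X)^2$; I would handle the latter by noting that Lemma \ref{lem:a(f,g)=a} already produces $g\in\mathcal{H}$ by design.
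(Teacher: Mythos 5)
Your proposal is correct and follows essentially the same route as the paper: both directions rest on Lemma \ref{lem:a(f,g)=a} (via Tsen's theorem) and Theorem \ref{thm:a(f,g)_p-tame}, and the paper's proof of (1)$\Rightarrow$(2) likewise produces functions $g_i$ with $a(f_i,g_i)=H_i$ and deduces global pseudo-tameness from the cocycle identity. The one point you leave vague --- which class to feed into Lemma \ref{lem:a(f,g)=a} --- is settled simply by taking $a=H_{i_0}$, since then $a(f_i,g)=a(f_i,f_{i_0})+H_{i_0}=(H_i-H_{i_0})+H_{i_0}=H_i$ is regular on $U_i$ for every $i$; no global assembly of the $H_i$ is required.
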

\begin{proof}
 First we prove (2) implies (1). 
Take a pseudo-tame rational function, $f\in k(X)$. 
Then $\beta(X)$ is defined by a pair of $(X,f)$. 
Since $a(f,f)=0$, we have $\beta(X)=0\in H^1(X,B_X)$.

 Next we prove (1) implies (2). 
Take a pair of $(\cU=(U_i)_{i\in I},(f_i)_{i\in I})$ which defines $\beta(X)$.
Since $\beta(X)=0\in H^1(X,B_X)$, for any $i\in I$ there exists $a_i\in H^0(U_i,B_X)$ satisfying $a(f_i,f_j)=a_i-a_j$.
Lemma \ref{lem:a(f,g)=a} implies that for each $i\in I$, 
there exists $g_i\in \mathcal{H}$ satisfying $a(f_i,g_i)=a_i$.
Since $a_i$ is regular on $U_i$, the function $g_i$ is pseudo-tame on $U_i$.
On the other hand, the cocycle condition shows 
\begin{eqnarray*}
a(g_i,g_j)&=&a(g_i,f_i)+a(f_i,g_j) \\
&=&a_i+(a(f_i,f_j)+a(f_j,g_j)) \\
&=&a_i+(a_i-a_j)+a_j \\
&=&0.
\end{eqnarray*}
Therefore $a(g_i,g_j)$ is clearly regular on $U_j$. That is, $g_i$ is pseudo-tame on $U_j$. Since $j$ is arbitrary, $g_i$ is pseudo-tame on $X$.
\end{proof}

\subsection{Vanishing of $\beta(X)$}\label{sec:vanishing}
In this paragraph, we prove Theorem \ref{thm:beta_vanishing},
which states that the obstruction class $\beta(X)$ vanishes for
any curve $X$.

The Serre duality gives a $k$-bilinear perfect pairing
\begin{equation}\label{eq:Serre}
(\ ,\ ) : H^0(X,B_X)\times H^1(X,B_X) \to k.
\end{equation}
In order to prove the vanishing of $\beta(X)$, we will give an explicit description of the pairing \eqref{eq:Serre}. We first give a description of $H^i(X,B_X)$ for $i=0,1$.

{\bf A description of $H^0(X,B_X)$.}
For $f\in k(X)/k(X)^2$, take a representative $\wt{f} \in k(X)$ of $f$ and write $df$ for the meromorphic differential 1-form $d\wt{f}\in \Omega_X$. Note that $df$ depends only on $f$ and is independent of the choice of $\wt{f}$. For a nonempty open subset $U\subset X$, we have 
$$H^0(U,B_X)=\{ f\in k(X)/k(X)^2 \mid df\in \Omega_X \ \mathrm{is \ regular \ on} \ U\}.$$

{\bf A description of $H^1(X,B_X)$.}
Let $X=U\cup V$ be an affine open covering. The Mayer-Vietoris exact sequence
$$H^0(U,B_X)\oplus H^0(V,B_X)\rightarrow H^0(U\cap V,B_X)\rightarrow H^1(X,B_X) \to 0$$
gives the surjection $$H^0(U\cap V,B_X)\rightarrow H^1(X,B_X)$$ which we denote by $\Psi_{B_X,U,V}$.

Then, the pairing \eqref{eq:Serre} can be explicitly described as follows.
Take any $f \in H^0(X,B_X)$ and $\alpha \in H^1(X,B_X)$.
Given an affine open covering $X=U\cup V$, we write  $\alpha=\Psi_{B_X,U,V}(g)$ for some $g \in H^0(U\cap V,B_X)$.
Let us we take representatives $\bar{f},\bar{g}\in k(X)$
of $f,g \in k(X)/k(X)^2$.
When $f=0$, we have nothing to say.
Thus we suppose that $f\neq 0$.
Then we have $g_0,g_1\in k(X)$ such that $\bar{g}=g_0^2+g_1^2\bar{f}$.
We have the following:
$$(f,\alpha)=\sum_{x\in X\setminus U}{\mathrm{Res}}_x(g_1df).$$
Here ${\mathrm{Res}}_x(g_1df)$ is the residue of $g_1df\in \Omega_x$ at a closed point $x$.

We are now ready to prove the vanishing theorem. 

\begin{thm}\label{thm:beta_vanishing}
For any curve $X$, we have $\beta(X)=0$.
\end{thm}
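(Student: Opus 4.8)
The plan is to exploit the perfectness of the Serre duality pairing \eqref{eq:Serre}: since it is perfect, proving $\beta(X)=0$ is equivalent to proving $(f,\beta(X))=0$ for every $f\in H^0(X,B_X)$. So I would fix an arbitrary $f\in H^0(X,B_X)$; the case $f=0$ is trivial, so assume $f\neq0$, which means a representative $\bar f$ lies in $\mathcal{H}$ and, by the description of $H^0(X,B_X)$ given above, that $df\in\Omega_X$ is regular at every closed point, i.e. $\mathrm{ord}_x(df)\ge0$ for all $x$. The entire problem then reduces to evaluating the right-hand side of the residue description of \eqref{eq:Serre} and showing it vanishes.

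Next I would represent $\beta(X)$ by a two-element covering adapted to the residue formula, which is legitimate because $\beta(X)$ is independent of the covering by Proposition \ref{prop: well-def beta}. Such a covering exists: any separable $\phi\in\mathcal{H}$ is tamely ramified, hence pseudo-tame, outside its finite ramification locus, so it is pseudo-tame on a dense affine open $U$; choosing a second $\psi\in\mathcal{H}$ whose bad locus is disjoint from that of $\phi$ gives an affine open cover $X=U\cup V$ with $\phi$ pseudo-tame on $U$ and $\psi$ pseudo-tame on $V$. Then $\beta(X)=\Psi_{B_X,U,V}(g)$ with $g:=a(\phi,\psi)\in k(X)/k(X)^2$, and by Theorem \ref{thm:a(f,g)_p-tame} the class $g$ is regular at a point precisely when both $\phi$ and $\psi$ are pseudo-tame there, i.e. exactly on $U\cap V$. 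Writing a representative as $\bar g=g_0^2+g_1^2\bar f$, the residue formula gives $(f,\beta(X))=\sum_{x\in X\setminus U}\mathrm{Res}_x(g_1\,df)$.

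A key preliminary step is a regularity comparison that turns $g_1\,df$ into a globally controlled object. Since $d\bar g=g_1^2\,df$, the class $g$ is regular at $x$ iff $2v_x(g_1)+\mathrm{ord}_x(df)\ge0$; because $\mathrm{ord}_x(df)\ge0$ everywhere, this inequality forces $v_x(g_1)+\mathrm{ord}_x(df)\ge0$, i.e. $g_1\,df$ is itself regular at $x$. Thus $g_1\,df$ is a global meromorphic $1$-form whose poles are confined to the non-pseudo-tame locus $(X\setminus U)\sqcup(X\setminus V)$, and the residue theorem yields $\sum_{x\in X\setminus U}\mathrm{Res}_x(g_1\,df)+\sum_{x\in X\setminus V}\mathrm{Res}_x(g_1\,df)=0$.

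The hard part will be to show that the single partial sum over $X\setminus U$ already vanishes. The residue theorem only relates the two partial sums, and the symmetry $a(\phi,\psi)=a(\psi,\phi)$ from Proposition \ref{prop:a(f,g)_basic} shows both partial sums equal $(f,\beta(X))$, so in characteristic two one obtains merely the vacuous identity $2(f,\beta(X))=0$. I therefore expect the crux to be a local analysis at each non-pseudo-tame point $x$ of $\phi$: after normalizing $\psi$ to a uniformizer $t$ by an element of $\Gamma$ (legitimate since $a(\phi,\psi)$ is $\Gamma$-invariant) and writing $\phi=F_0^2+F_1^2\psi$, the identity $d\,a(\phi,\psi)=A(\phi,\psi)\,d\psi/(d\phi/d\psi)$ yields $g_1\,df=\sqrt{A(\phi,\psi)}\,\sqrt{df/dt}\,F_1^{-1}\,dt$, and the task is to extract $\mathrm{Res}_x$ of this expression and prove that, summed over all the bad points of $\phi$, it cancels — using the holomorphicity of $df$ together with the precise shape of $A(\phi,\psi)$ and the pseudo-tameness of $\psi$. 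Making this cancellation rigorous, presumably either by exhibiting $g_1\,df$ as the image under the Cartier operator of a form whose residues one can control, or by arranging $\phi$ and $\psi$ so that $g_1\,df$ becomes regular along $X\setminus V$ and then invoking the residue theorem, is where essentially all the difficulty lies, and it is the step I would concentrate on.
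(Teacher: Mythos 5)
Your setup coincides with the paper's: pair $\beta(X)$ against an arbitrary nonzero $f\in H^0(X,B_X)$ via the perfect pairing \eqref{eq:Serre}, represent $\beta(X)$ on a two-set affine covering, and reduce to the residue formula. But the proof stops exactly where the actual argument has to happen. As you yourself note, the residue theorem combined with the symmetry $a(\phi,\psi)=a(\psi,\phi)$ only yields the vacuous identity $2(f,\beta(X))=0$, and the local analysis you sketch at the bad points of $\phi$ (extracting residues of $\sqrt{A(\phi,\psi)}\,\sqrt{df/dt}\,F_1^{-1}\,dt$, or invoking the Cartier operator) is not carried out and is not the mechanism that makes the statement true. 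So the crux is genuinely missing.

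The missing idea is the \emph{choice of the cocycle representative}: do not take two unrelated functions $\phi,\psi$ with disjoint bad loci, but take $\phi=\wt{f}$, a representative of the very class $f$ being paired against, and $\psi=g^2+\wt{f}$ for a suitable $g\in k(X)$ chosen so that $\psi$ is tame on the non-tame locus $S$ of $\wt{f}$ (this is a finite local condition, achievable by adjusting Laurent expansions by squares). With $U=X\setminus S$ and $V$ the complement of the bad locus of $\psi$, this pair computes $\beta(X)$. Writing $\psi=G_0^2+G_1^2\wt{f}$ with $G_0=g$, $G_1=1$, Definition \ref{df:a(f,g)} gives
$$
a(\wt{f},\psi)=\left(\frac{dg}{d\wt{f}}\right)^2\wt{f}\ \mathrm{mod}\ k(X)^2,
$$
so in the residue formula the coefficient is $g_1=dg/d\wt{f}$ and hence $g_1\,df=dg$ is an \emph{exact} differential. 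Every single residue $\mathrm{Res}_x(g_1\,df)=\mathrm{Res}_x(dg)$ therefore vanishes individually; no global cancellation over $X\setminus U$ is needed. Without tying one member of the covering pair to $\wt{f}$ itself, the partial sum $\sum_{x\in X\setminus U}\mathrm{Res}_x(g_1\,df)$ has no visible reason to vanish, and your argument does not close.
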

\begin{proof}
 Let $X$ be a curve. 
It suffices to show that $(f,\beta(X))=0\in k$ for any $f\in H^0(X,B_X)$. 
We can assume that $f\neq0\in k(X)/k(X)^2$. 
Take a representative $\wt{f}\in k(X)$ of $f$. 
Let $S$ be the set of closed points of $X$ at which $\wt{f}$ is not tame.
Then we can find $g\in k(X)$ such that $g^2+\wt{f}$ is tame on $S$. 
Let $T$ be the set of closed points of $X$ 
at which $g^2+\wt{f}$ is not tame and 
we set $U=X\setminus S$, $V=X\setminus T$. 
By the definition of $\beta(X)$, 
we have $\beta(X)=\Psi_{B_X,U,V}(a)$ for 
$a:=a(\wt{f},\, g^2+\wt{f})\in H^0(U\cap V, B_X)$. 
Recall that we have 
$$
a=({dg}/{d\wt{f}})^2\wt{f}\mod\, k(X)^2.
$$
Then we have 
$$
(f,\beta(X))=\sum_{x\in S}{\mathrm{Res}}_{x}(({dg}/{d\wt{f}})\cdot\wt{f}).
$$
Since ${\mathrm{Res}}_{x}(({dg}/{d\wt{f}})\cdot\wt{f})={\mathrm{Res}}_{x}(dg)=0$ at any closed point $x \in X$, we have $(f,\beta(X))=0$.
\end{proof}
Thus we conclude that we have a pseudo-tame rational function for any curve.
\begin{cor}\label{cor:p-tame exists}
For any curve $X$, there exists a rational function $f\in \mathcal{H}$ which is pseudo-tame on $X$.
\end{cor}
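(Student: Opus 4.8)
The plan is to deduce this corollary directly from the two main results that precede it, since essentially all the genuine work has already been carried out. The statement to establish is that some $f\in\mathcal{H}$ is pseudo-tame on all of $X$, and this is precisely condition (2) of Theorem \ref{thm: beta_obstruction}, modulo the observation that a function that is pseudo-tame everywhere automatically lies in $\mathcal{H}=k(X)\setminus k(X)^2$. So the strategy is: invoke the vanishing of the obstruction class, then feed this into the obstruction-theoretic equivalence.

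Concretely, first I would apply Theorem \ref{thm:beta_vanishing} to conclude that $\beta(X)=0\in H^1(X,B_X)$ for the given curve $X$. Next I would apply the implication (1)$\Rightarrow$(2) of Theorem \ref{thm: beta_obstruction}: since $\beta(X)=0$, there exists a rational function $f$ on $X$ that is pseudo-tame on $X$, i.e.\ pseudo-tame at every closed point. Finally, I would note that such an $f$ cannot lie in $k(X)^2$, because pseudo-tameness at a point requires $df\neq 0$ there (equivalently, by Remark \ref{rem:expansion}, the existence of a non-vanishing Laurent term of finite degree governing $\ord_x(df)$); hence $f\in k(X)\setminus k(X)^2=\mathcal{H}$, as required.

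The hard part is not located in this corollary at all: the substantive content lies upstream, in the \v{C}ech-cocycle formalism for $a(f,g)$ (Proposition \ref{prop:1-cocycle}), the identification of $\beta(X)$ as a genuine obstruction (Theorem \ref{thm: beta_obstruction}, whose reverse implication relies on Lemma \ref{lem:a(f,g)=a} and Tsen's theorem), and above all the residue computation proving $\beta(X)=0$ in Theorem \ref{thm:beta_vanishing}. Given these, the only point demanding any care here is the trivial verification that ``pseudo-tame on $X$'' forces membership in $\mathcal{H}$, and I expect no real obstacle: the corollary is a one-line consequence of combining Theorems \ref{thm:beta_vanishing} and \ref{thm: beta_obstruction}.
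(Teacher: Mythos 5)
Your proposal is correct and matches the paper's (implicit) argument exactly: the corollary is obtained by combining Theorem \ref{thm:beta_vanishing} with the implication (1)$\Rightarrow$(2) of Theorem \ref{thm: beta_obstruction}. The extra remark that a pseudo-tame function necessarily lies in $\mathcal{H}$ is a harmless and correct addition.
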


\begin{rem}
We remark that, the set of $\Gamma$-orbits of pseudo-tame rational functions on $X$ has the following structure of
$H^0(X,B_X)$-torsor: if $S$ is the $\Gamma$-orbit of a pseudo-tame rational function $f \in k(X)$ and $a \in H^0(X,B_X)$, then $a\cdot S$ is the set of $g \in H$ satisfying $a(f,g)=a$. It follows from Lemma \ref{lem:a(f,g)=a} that this set is non-empty and it follows from Proposition \ref{prop:a(f,g)_Gamma} consists of a single $\Gamma$-orbit.
The cocycle condition in Proposition \ref{prop:1-cocycle} implies that $a\cdot(b\cdot S) = (a+b)\cdot S$ for any $a,b \in H^0(X,B_X)$.

In particular, if the Jacobian of $X$ is ordinary, then the pseudo-tame rational functions on $X$ form a single $\Gamma$-orbit.
\end{rem}

\section{Existence of a tamely ramified rational function}\label{sec:exists}

In the previous Section, we proved the existence of a pseudo-tame rational function on any curve. The aim of this section is to prove, by using a ``cubing" technique, that any $\Gamma$-orbit of a pseudo-tame rational function contains a tamely ramified rational function.

We first fix and recall some notation. 
For a curve $X$, we identify the rational functions on $X$
with the morphisms from $X$ to the projective line $\bP^1_k$ over $k$. We say that a rational function on $X$ is tamely ramified if it is, as a morphism from $X$ to $\bP^1_k$, at most tamely ramified at every closed point of $X$. Under this notation, our main result can be stated as follows:

\begin{thm}\label{thm:tame exists}
Let $X$ be a curve. Let us fix a closed point $x\in X$ and set $A=\Gamma(X\setminus\left\lbrace x \right\rbrace ,\mathcal{O}_X)$. 
Let $f_0$ be a pseudo-tame rational function on $X$.
Then there exists an element $\gamma \in \Gamma = PGL_2(k(X)^4)$
such that $f := \gamma f_0 \in A$ and that $f$ is a 
tamely ramified rational function.
\end{thm}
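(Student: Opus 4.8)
The plan is to exploit the full freedom of $\Gamma = PGL_{2}(k(X)^{4})$, which by Corollary~\ref{cor:p-tame} preserves pseudo-tameness at every closed point, and to promote pseudo-tameness to genuine tameness. The arithmetic engine is the observation that, in a local uniformizer $s$ at a point, a fourth power satisfies $h^{4} = \sum_{i} a_{i}^{4} s^{4i}$ whenever $h = \sum_{i} a_{i} s^{i}$; hence the unipotent move $f \mapsto f + h^{4}$, which lies in $\Gamma$, alters only the terms of degree divisible by four and leaves the odd-degree terms—those that by Remark~\ref{rem:expansion} detect tameness—untouched. By Remark~\ref{rem:expansion}, a point at which $f$ is pseudo-tame but not tame is precisely one whose lowest terms (all of degree $\equiv 0 \bmod 4$) agree with a perfect fourth power up to the first odd-degree term. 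Since $k$ is algebraically closed, that even part equals a local $h_{\mathrm{loc}}^{4}$ with $h_{\mathrm{loc}} = \sum_{j} c_{4j}^{1/4} s^{j}$, so subtracting the matching $h_{\mathrm{loc}}^{4}$ leaves an odd leading term and makes the point tame. One can package such a correction as an explicit element of $\Gamma$ of the shape appearing in Proposition~\ref{prop:a(f,g)_Gamma}, where cubes of the local data enter the matrix; this is the ``cubing'' normal form that I expect to make the local fixes explicit enough to globalize.

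First I would localize the tameness problem. A pseudo-tame $f_{0}$ is ramified at only finitely many closed points and non-tame at finitely many of them, say $p_{1},\dots,p_{m}$; at each $p_{j}$ the recipe above prescribes a local fourth-power tail to finite precision. I would then assemble these into a single global $h \in A = \Gamma(X\setminus\{x\},\cO_{X})$, i.e.\ with poles only at the distinguished point $x$: by Riemann--Roch the evaluation map $H^{0}(X,\cO_{X}(Nx)) \to \cO_{X}/\cO_{X}(-D)$ is surjective for $N$ large and $D$ a fixed effective divisor supported away from $x$, so I can match the prescribed tails at every $p_{j}$ while forcing $h$ to vanish to high order at each point where $f_{0}$ is already tamely ramified. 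Because $h^{4}$ contributes only degrees $\equiv 0 \bmod 4$, the latter vanishing conditions ensure the odd leading terms at the good ramified points are undisturbed, so no new non-tame points appear, while the prescribed tails make $f_{0}+h^{4}$ tame at each $p_{j}$. As $f_{0}+h^{4} = \gamma f_{0}$ for a unipotent $\gamma \in \Gamma$, the result stays pseudo-tame everywhere and becomes tame away from $x$.

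What remains, and what I expect to be the main obstacle, is the simultaneous control of the polar behaviour at $x$. Two things must be arranged together: that $f$ lies in $A$ (all poles concentrated at $x$), and that the pole at $x$ is tame, i.e.\ of odd order. The membership in $A$ requires a non-unipotent $\gamma$: writing $\gamma f_{0} = (a^{4}f_{0}+b^{4})/(c^{4}f_{0}+d^{4})$, its polar locus is the zero locus of $f_{0} + (d/c)^{4}$, so I would search for a fourth power $w^{4} \in k(X)^{4}$ with $f_{0}+w^{4}$ vanishing only at $x$, again via a Riemann--Roch estimate; and then apply the same degree-$\equiv 0 \bmod 4$ cancellation (now combined with inversion) to the pole expansion to force odd pole order at $x$. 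The delicate point—precisely the place where the earlier version of the argument needed correction—is that the pole-concentration choice of $w$, the prescribed-tail and high-vanishing conditions at the other ramified points, and the parity of the pole order at $x$ must all be met by a single $\gamma$ without interfering with one another. I expect the cubing normal form of Proposition~\ref{prop:a(f,g)_Gamma}, which renders the local corrections completely explicit, to be exactly what allows these constraints to be reconciled in one global construction.
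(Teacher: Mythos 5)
Your local analysis is sound and agrees with the paper's treatment of the bad points: at a point where $f$ is pseudo-tame but not tame, all terms of degree less than $\ord(df)+1$ are multiples of four, hence form a local fourth power that a globally chosen $h^4$ can cancel, and Riemann--Roch lets you prescribe the finitely many required jets of $h$. (Getting into $A$ is also easier than you make it: writing $f_0=h_0/h_1$ with $h_0,h_1\in A$, the element $h_1^4f_0=h_1^3h_0$ already lies in $A$ and in the same $\Gamma$-orbit, so no search for $w^4$ with $f_0+w^4$ vanishing only at $x$ is needed.)

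The genuine gap is exactly the step you flag as ``the main obstacle'' and then defer: making the pole order at $x$ odd while simultaneously performing the corrections elsewhere. Since $h\in A$ has its pole at $x$, the correction $h^4$ contributes a pole of order $4\deg(h)$ there, and the Riemann--Roch bound (Lemma \ref{lem:lem3}) only gives $\deg(h)<e+3g-1$ where $2e-1=\deg(f)$; this exceeds $\deg(f)/4$, so $f+h^4$ would in general have even pole order $4\deg(h)$ at $x$ and fail to be tame precisely at the distinguished point. The paper's resolution --- the actual meaning of ``cubing'' --- is to correct $f^3$ rather than $f$: after arranging (Lemmas \ref{lem:lem2} and \ref{lem:lem4}) that $\deg(f)=2e-1\ge 12g-2$ is odd and all zeros of $f$ are simple, one has $4\deg(h)<4(e+3g-1)\le 3(2e-1)=\deg(f^3)$, so $f^3+h^4$ retains odd pole order at $x$; the price, namely that $d(f^3)=f^2\,df$ acquires new zeros at the zeros of $f$, is paid for by the simplicity of those zeros, which keeps $\ord_y(d(f^3+h^4))\le 2$ outside $Z\cup\{x\}$ and, combined with pseudo-tameness, forces tameness there. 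Your reading of ``cubing'' as the matrix normal form in Proposition \ref{prop:a(f,g)_Gamma} is a misidentification, and nothing in your proposal replaces the substitution $f\mapsto f^3$ (or supplies an equivalent device); without it the three constraints you list cannot be reconciled by the construction as written.
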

We will give a proof of the theorem at the end of this section.
Let $g$ denote the genus of $X$.
For an element $f\in k(X)$, we denote the order of pole at $x$ by $\deg(f)$, i.e., $\deg(f):=-\ord_{x}(f)$. 

\begin{lem}\label{lem: lem1}
For a nonzero ideal $I\subset A$, we set $d:=\dim_{k}A/I$. Then for any integer $n$ with $n\geq 2g+d$, there exists an element $f \in I$ such that $\deg(f)=n$.
\end{lem}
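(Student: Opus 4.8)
The plan is to use the Riemann–Roch theorem to control the dimensions of spaces of functions with prescribed pole orders at the single point $x$, and then extract an element with the \emph{exact} pole order $n$. First I would recall that for a divisor $D$ on the curve $X$, the Riemann–Roch theorem gives $\ell(D) - \ell(K-D) = \deg(D) - g + 1$, where $K$ is a canonical divisor. I would consider the divisors $D_m = m\cdot x$ and note that $\Gamma(X\setminus\{x\},\cO_X) = \bigcup_{m\geq 0} L(m\cdot x) = A$, so that $\deg(f) = -\ord_x(f)$ for a nonzero $f\in A$ is simply the smallest $m$ with $f\in L(m\cdot x)$.

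Next I would translate the ideal $I\subset A$ into divisorial terms. The quotient $A/I$ has $k$-dimension $d$, and I would want to interpret $I$ as the sections in $A$ vanishing along an effective divisor (or more precisely, lying in an ideal cutting out a length-$d$ subscheme supported away from $x$). The key point is to compare, for each integer $m$, the space $I \cap L(m\cdot x)$ with $L(m\cdot x)$ itself: imposing membership in $I$ cuts down the dimension by at most $d$, and for $m$ large enough it cuts it down by \emph{exactly} $d$ once the relevant higher cohomology vanishes. Concretely, I expect $\dim_k\bigl(L(m\cdot x)/(I\cap L(m\cdot x))\bigr)$ to stabilize to $d$ once $m$ is large, and the threshold where this stabilization, together with the Riemann–Roch formula for $L(m\cdot x)$, kicks in is governed by the condition $m \geq 2g + d$ (so that $\deg(m\cdot x - E) > 2g-2$ for the length-$d$ subscheme $E$, forcing the obstruction $H^1$ to vanish).

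The extraction of an element of \emph{exact} degree $n$ is then a dimension-counting step: I would show that for $n \geq 2g+d$ the dimension of $I\cap L(n\cdot x)$ is strictly larger than that of $I\cap L((n-1)\cdot x)$, so there must exist an $f\in I\cap L(n\cdot x)$ not lying in $L((n-1)\cdot x)$, i.e.\ with $\ord_x(f) = -n$ exactly, giving $\deg(f)=n$. The strict inequality follows because, once we are past the stabilization threshold, both $I\cap L(m\cdot x)$ and $L(m\cdot x)$ grow by exactly one dimension as $m$ increments by one (the former tracking the latter with a constant defect $d$).

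The main obstacle I anticipate is making the comparison between $I$ and the divisorial filtration fully rigorous: one must argue carefully that for $m\geq 2g+d$ the codimension of $I\cap L(m\cdot x)$ inside $L(m\cdot x)$ is precisely $d$ and not merely at most $d$, which is where the vanishing of the relevant $H^1$ (equivalently the Serre-dual space $\ell(K-D)=0$ for $\deg D > 2g-2$) must be invoked. Everything else is a routine application of Riemann–Roch and the long exact sequence associated to the subscheme cut out by $I$, but pinning down the exact numerical threshold $2g+d$ and verifying the strict dimension jump at step $n$ is the delicate part.
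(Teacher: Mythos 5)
Your proposal is correct and is essentially the paper's own argument: the paper associates to $I$ the effective divisor $D$ of degree $d$ on $\Spec(A)$ and applies Riemann--Roch directly to $nx-D$ and $(n-1)x-D$, where the hypothesis $n\geq 2g+d$ guarantees both degrees exceed $2g-2$, so the $H^1$ terms vanish and the dimensions differ by exactly one, yielding an element of $I$ with pole order exactly $n$. The ``delicate'' point you flag (exact codimension $d$) is handled automatically by computing $\dim_k H^0(X,\cO(nx-D))$ directly rather than as a codimension inside $L(nx)$.
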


\begin{proof}
Let $D$ be the effective divisor on Spec$(A)=X\setminus\left\lbrace x\right\rbrace $ associated with the ideal $I$. Then we have $\deg(D)=d$. The Riemann-Roch theorem implies 
$$
\dim_{k}H^0(X,\mathcal{O}(nx-D))=n-d+1-g
$$
and
$$
\dim_{k}H^0(X,\mathcal{O}((n-1)x-D))=n-d-g.
$$
In particular, we have an inequality $$\dim_{k}H^0(X,\mathcal{O}(nx-D))>\dim_{k}H^0(X,\mathcal{O}((n-1)x-D)).$$
Thus there exists $f\in k(X)^\times$ satisfying $\divi(f)+nx-D$ is effective and $\divi(f)+(n-1)x-D$ is non-effective. Then the element $f$ satisfies the above conditions.
\end{proof}

\begin{lem}\label{lem:lem2}
For an element $f \in A$, we assume that $f$ is pseudo-tame at $x$ and that the order of the differential $df$ at $x$ satisfies $-\ord_{x}(df)\geq8g$. Then there exists $h\in A$ such that $\deg(f+h^4)=-\ord_{x}(df)-1$.
\end{lem}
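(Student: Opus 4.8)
The plan is to work entirely with the Laurent expansion of $f$ at $x$ and to exploit that, in characteristic two, the fourth-power map is additive. Fix a uniformizer $s$ at $x$ and write $f=\sum_{i}c_{i}s^{i}$. Since $s$ is a uniformizer, $ds$ generates $\Omega_{X,x}$, so $df=\bigl(\sum_{i\ \mathrm{odd}}c_{i}s^{i-1}\bigr)ds$; setting $e:=-\ord_{x}(df)$, the smallest odd index with $c_{i}\neq0$ is $i=1-e$, with $c_{1-e}\neq0$, and $e$ is even with $e\geq8g$. By Remark \ref{rem:expansion}, pseudo-tameness of $f$ at $x$ says that $c_{i}=0$ for every $i\leq-e$ with $4\nmid i$. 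Because $h=\sum_{j}b_{j}s^{j}$ gives $h^{4}=\sum_{j}b_{j}^{4}s^{4j}$ (Frobenius additivity, $k$ perfect), adding a fourth power only alters coefficients in degrees divisible by four, and in particular never touches the degree-$(1-e)$ coefficient. Hence it suffices to produce $h\in A$ with $\ord_{x}(f+h^{4})\geq1-e$: the coefficient in degree $1-e$ remains $c_{1-e}\neq0$, forcing $\deg(f+h^{4})=e-1=-\ord_{x}(df)-1$.

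I would then reformulate this as a linear-algebra statement. Write $V_{n}:=H^{0}(X,\cO_{X}(nx))\subset A$, put $n_{0}:=\deg f$ and $N:=\lfloor n_{0}/4\rfloor$. The requirement $\ord_{x}(f+h^{4})\geq1-e$ is exactly $f+h^{4}\in V_{e-1}$. Consider the map
$$
\Phi\colon V_{N}\longrightarrow V_{n_{0}}/V_{e-1},\qquad \Phi(h)=h^{4}\bmod V_{e-1},
$$
which is well defined (as $h\in V_{N}$ gives $h^{4}\in V_{n_{0}}$) and additive, hence $k$-linear since $k=k^{4}$. Identifying $V_{n_{0}}/V_{e-1}$ with the vector of coefficients in degrees $-n_{0},\dots,-e$, pseudo-tameness places the class $\pi(f)$ of $f$ in the subspace $W$ spanned by the degrees divisible by four, and clearly $\mathrm{image}(\Phi)\subseteq W$ since $h^{4}$ is supported on such degrees. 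Thus it is enough to show $\Phi$ surjects onto $W$.

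The surjectivity follows from an exact dimension count. The kernel is $\{h:h^{4}\in V_{e-1}\}=V_{\lfloor(e-1)/4\rfloor}$. Since $e\geq8g$ we have $N\geq2g-1$ and $\lfloor(e-1)/4\rfloor\geq2g-1$, so $H^{1}$ vanishes for both line bundles and Riemann–Roch gives $\dim V_{N}=N+1-g$ and $\dim\ker\Phi=\lfloor(e-1)/4\rfloor+1-g$; hence $\dim\mathrm{image}(\Phi)=N-\lfloor(e-1)/4\rfloor$. An elementary floor computation (using that $e$ is even, so $\lfloor-e/4\rfloor+\lfloor(e-1)/4\rfloor=-1$) shows this equals $\dim W$, the number of multiples of four in $[-n_{0},-e]$. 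As $\mathrm{image}(\Phi)\subseteq W$, equality of dimensions yields $\mathrm{image}(\Phi)=W\ni\pi(f)$. Choosing $h\in V_{N}\subset A$ with $\Phi(h)=\pi(f)$ gives $f+h^{4}\in V_{e-1}$, and by the first paragraph $\deg(f+h^{4})=-\ord_{x}(df)-1$.

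The substantive point, and the step I expect to be the main obstacle, is the \emph{exact} balancing of dimensions. A priori, prescribing the deepest pole coefficients of a global function in $A$ is obstructed by the $g$-dimensional cokernel of $A\to(\text{principal parts at }x)$, so one cannot in general match an arbitrary deep pole pattern. The hypothesis $-\ord_{x}(df)\geq8g$ is precisely what forces $H^{1}(X,\cO_{X}(Nx))=H^{1}(X,\cO_{X}(\lfloor(e-1)/4\rfloor x))=0$, so that the naive Riemann–Roch dimensions hold and this obstruction cancels, making $\dim\mathrm{image}(\Phi)=\dim W$. The two conceptual moves that unlock the count are recognizing, via pseudo-tameness, that $\pi(f)$ lives in the ``multiples of four'' subspace $W$, and recognizing $\Phi$ as a genuinely $k$-linear map through Frobenius additivity.
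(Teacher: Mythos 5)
Your proof is correct. The paper argues differently in form: it runs a downward induction on $\deg(f)-2e$ (where $2e=-\ord_x(df)$), observing that whenever $\deg(f)\ge 2e$ pseudo-tameness forces $\deg(f)=4d$ with $d\ge 2g$, so Lemma \ref{lem: lem1} (applied with $I=A$) yields $h_0\in A$ of degree exactly $d$, and subtracting a suitable $(ah_0)^4$ strictly lowers the pole order; iterating eliminates every offending coefficient. You package the same two ingredients --- Riemann--Roch at $x$ and the ``multiples of four'' constraint from Remark \ref{rem:expansion} --- into a single rank--nullity computation for $h\mapsto h^4\bmod V_{e-1}$ rather than a greedy elimination. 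Both are valid; the paper's induction avoids the floor-function bookkeeping, while your version makes explicit exactly where $-\ord_x(df)\ge 8g$ is used, namely to kill the $H^1$'s so that the naive dimension count is exact. Two small points to tidy up: $\Phi(ch)=c^4\Phi(h)$, so $\Phi$ is Frobenius-semilinear rather than $k$-linear --- since $k$ is perfect its kernel and image are still $k$-subspaces and rank--nullity still applies, so your count is unaffected, but the wording should be corrected; and your identification of $V_{n_0}/V_{e-1}$ with the full space of coefficients in degrees $-n_0,\dots,-e$ quietly uses $H^1(X,\cO_X((e-1)x))=0$, i.e.\ $e-1\ge 2g-1$, which the hypothesis supplies but which deserves a mention.
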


\begin{proof}
We denote $-\ord_{x}(df)$ by $2e$ for some integer $e$. Then we prove the Lemma by the induction for $\deg(f)-2e$. If $\deg(f)-2e<0$ then we can take $h:=0$. Otherwise, we can find an integer $d$ with $\deg(f)=4d$. By assumption, we have an inequality $d\geq 2g$. Thus Lemma \ref{lem: lem1} implies that we can find an element $h_0\in A$ with $\deg(h_0)=d$. Then for some $a\in k$, we have $\deg(f+(ah_0)^4)<4d$. By the induction hypothesis, we can take $h_1 \in A$ with $\deg(f+(ah_0)^4+h_1^4)=2e-1$, that is, $h:=ah_0+h_1$ is what we want.
\end{proof}

\begin{lem}\label{lem:lem3}
For a nonzero ideal $I\subset A$, we set $d:=\dim_{k}A/I$. 
Then for any $a\in A$, there exists $f\in A$ 
satisfying $f \equiv a \mod I$ and $\deg(f)<d+2g$.
\end{lem}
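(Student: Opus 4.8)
The plan is to reduce the statement to the surjectivity of a single reduction map and then settle that surjectivity by a Riemann--Roch dimension count. For an integer $n\geq 0$, set $A_n=H^0(X,\cO(nx))=\{f\in A\mid \deg(f)\leq n\}$, the space of functions with a pole of order at most $n$ at $x$ that are regular elsewhere, so that $A=\bigcup_{n}A_n$. The assertion is equivalent to saying that the composite $A_{d+2g-1}\inj A\surj A/I$ is surjective: a representative $f$ of $a$ lying in $A_{d+2g-1}$ automatically satisfies $\deg(f)\leq d+2g-1<d+2g$.

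First I would describe the kernel of $A_n\to A/I$. Since $A$ is the coordinate ring of the smooth affine curve $\Spec(A)=X\setminus\{x\}$, it is a Dedekind domain, and $I$ is the ideal of functions vanishing along the effective divisor $D$ on $X\setminus\{x\}$ attached to $I$, with $\deg(D)=d$ (as in the proof of Lemma \ref{lem: lem1}). Translating the conditions ``$f\in I$'' and ``$\deg(f)\leq n$'' into divisor language gives $A_n\cap I=H^0(X,\cO(nx-D))$. Hence the image of $A_n$ in $A/I$ has dimension $\dim_k A_n-\dim_k H^0(X,\cO(nx-D))$.

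The count is then immediate for $n=d+2g-1$. The divisors $nx$ and $nx-D$ have degrees $d+2g-1$ and $2g-1$ respectively, both at least $2g-1$, so the relevant $H^1$ vanishes and Riemann--Roch gives $\dim_k A_{n}=n+1-g=d+g$ together with $\dim_k H^0(X,\cO(nx-D))=(n-d)+1-g=g$. Therefore the image of $A_{d+2g-1}$ in $A/I$ has dimension $(d+g)-g=d=\dim_k A/I$, which forces the reduction map to be surjective; this completes the argument.

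The only points needing care are bookkeeping rather than genuine obstacles. One must check that both $\deg(nx)=d+2g-1$ and $\deg(nx-D)=2g-1$ exceed $2g-2$, so that the cohomological obstruction disappears and the two Riemann--Roch equalities hold; this holds since $d\geq 1$, the case $I=A$ being trivial. One must also verify the identification $A_n\cap I=H^0(X,\cO(nx-D))$ directly, since it is precisely the step that converts ideal membership into a divisor of controlled degree and thereby makes the dimension count applicable.
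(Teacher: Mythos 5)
Your argument is correct, but it takes a genuinely different route from the paper's. The paper chooses a representative $f$ of $a$ modulo $I$ whose degree is minimal among all representatives and argues by contradiction: if $\deg(f)\geq d+2g$, then Lemma \ref{lem: lem1} supplies $h\in I$ with $\deg(h)=\deg(f)$, and subtracting a suitable scalar multiple of $h$ produces a representative of strictly smaller degree, contradicting minimality. You instead prove the stronger structural statement that the reduction map $H^0(X,\cO((d+2g-1)x))\to A/I$ is already surjective, by identifying its kernel with $H^0(X,\cO((d+2g-1)x-D))$ and comparing dimensions via Riemann--Roch. Both proofs ultimately rest on the same Riemann--Roch input (Lemma \ref{lem: lem1} is itself such a count), but the paper's version is a short descent that recycles Lemma \ref{lem: lem1} and avoids any further cohomology, while yours makes the underlying linear algebra explicit and yields, as a by-product, the surjectivity of a fixed finite-dimensional truncation onto $A/I$. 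One minor remark: the vanishing you need holds because $d+2g-1$ and $2g-1$ both exceed $2g-2$ for every $d\geq 0$, so the restriction to $d\geq 1$ that you invoke at the end is not actually required.
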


\begin{proof}
Let $f\in A$ be a representative of $(a \mod I) \in A/I$ such that $\deg(f)\leq \deg(f_1)$ for any other representative $f_1\in A $ of  $(a \mod I) $. Then the element $f\in A$ satisfies  $\deg(f)<d+2g$. In fact, suppose $\deg(f)\geq d+2g$. Then Lemma \ref{lem: lem1} gives us an element $h\in I$ with $\deg(h)=\deg(f)$. Then we can find $b\in k$ such that $\deg(f+bh)<\deg(f)$ and $f+bh\equiv a \mod I$. This contradicts to the minimality of $\deg(f)$.
\end{proof}

\begin{lem}\label{lem:lem4}
Let $f_0$ be a pseudo-tame rational function on $X$
and $r$ a positive integer with $r \ge 8g-1$.
Then there exists an element $\gamma \in \Gamma$
such that $f := \gamma f_0 \in A$ and that $f$
satisfies the following conditions:
\begin{enumerate}
\item $\deg(f)$ is odd and $\deg(f)\geq r$,
\item Any zero of $f$ is simple.
\end{enumerate}
\end{lem}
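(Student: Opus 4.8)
The plan is to produce $\gamma$ as a product $\gamma=\gamma_3\gamma_2\gamma_1$ of three elements of $\Gamma$, applied in turn, each installing one feature of the conclusion: $\gamma_1$ brings $f_0$ into $A$ while inflating the order of $df$ at $x$, $\gamma_2$ uses pseudo-tameness and Lemma \ref{lem:lem2} to force the pole order at $x$ to be odd and $\ge r$, and $\gamma_3$ is a generic constant translation making the zeros simple. Throughout I would use that $\mathcal{H}$ is $\Gamma$-stable and that pseudo-tameness is $\Gamma$-invariant (Corollary \ref{cor:p-tame}), so every intermediate function is again a pseudo-tame element of $\mathcal{H}$; in particular each has nonzero differential and is separable over $k$.

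First, for Step 1, let $P_1,\dots,P_m$ be the poles of $f_0$ other than $x$ and set $n_i=-\ord_{P_i}(f_0)>0$. Using Riemann--Roch (exactly as in Lemma \ref{lem: lem1}) I would choose $s\in k(X)$ whose only pole is at $x$, with $\ord_{P_i}(s)\ge\lceil n_i/4\rceil$ and with pole order at $x$ as large as I please. Set $\gamma_1\colon f\mapsto s^4f$, the element $\mathrm{diag}(s^4,1)\in\Gamma$, and $f_1:=\gamma_1f_0=s^4f_0$. Since $4\ord_{P_i}(s)-n_i\ge0$ and $s$ is regular off $x$, the function $f_1$ lies in $A$. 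Because $d(s^4)=0$ in characteristic two we have $df_1=s^4\,df_0$, hence $\ord_x(df_1)=4\ord_x(s)+\ord_x(df_0)$; choosing the pole order of $s$ at $x$ large makes $-\ord_x(df_1)$ as large as we wish, in particular $\ge r+1$, and note $r+1\ge 8g$ by hypothesis.

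For Step 2, the function $f_1\in A$ is pseudo-tame at $x$ and satisfies $-\ord_x(df_1)\ge 8g$, so Lemma \ref{lem:lem2} furnishes $h\in A$ with $\deg(f_1+h^4)=-\ord_x(df_1)-1$. Put $\gamma_2\colon f\mapsto f+h^4$ and $f_2:=\gamma_2f_1=f_1+h^4\in A$. The key observation is that this degree is automatically odd: writing $df_1=(\sum_{n\text{ odd}}a_ns^{n-1})\,ds$ in a uniformizer $s$ at $x$, the even-index terms die in characteristic two, so $\ord_x(df_1)$ equals (the smallest odd $n$ with $a_n\neq0$) minus $1$, which is even; therefore $\deg(f_2)=-\ord_x(df_1)-1$ is odd, and it is $\ge r$ by the choice made in Step 1.

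Finally, for Step 3, since $f_2\in\mathcal{H}$ is separable the morphism $f_2\colon X\to\bP^1_k$ has finite branch locus $B\subset\bP^1_k$; for any $c\in k$ with $-c\notin B$ (all but finitely many $c$) the fiber $f_2^{-1}(-c)$ is reduced, i.e.\ every zero of $f_2+c$ is simple, and these zeros avoid $x$, which stays a pole. Taking $\gamma_3\colon f\mapsto f+c$, which lies in $\Gamma$ because $c=(c^{1/4})^4\in k(X)^4$, the function $f:=\gamma_3f_2=f_2+c\in A$ has the same pole order at $x$, so $\deg(f)=\deg(f_2)$ is odd and $\ge r$, with all zeros simple; then $\gamma=\gamma_3\gamma_2\gamma_1$ works. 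The only genuinely delicate point is Step 2, namely knowing that the degree returned by Lemma \ref{lem:lem2} is odd, which rests on the characteristic-two fact that $\ord_x(df)$ is always even for $f\in\mathcal{H}$; the rest is bookkeeping, since $h\in A$ and $c\in k\subset A$ keep each successive function in $A$, and the constant translation of Step 3 leaves the pole order at $x$—hence the parity and size of $\deg$—unchanged.
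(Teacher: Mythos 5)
Your proof is correct and follows essentially the same route as the paper's: clear the poles away from $x$ and inflate $-\ord_x(df)$ by multiplying by a fourth power (the paper does this in two separate multiplications $h_1^4$, $h_2^4$ rather than your single $s^4$), then apply Lemma \ref{lem:lem2} to obtain an odd degree $\ge r$, and finally add a generic constant to make all zeros simple. Your explicit verification that $\ord_x(df)$ is always even in characteristic two (so that the degree returned by Lemma \ref{lem:lem2} is odd) is a point the paper leaves implicit in its notation $2e:=-\ord_x(df_2)$.
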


\begin{proof}
Let us write $f_0=h_0/h_1$ for some $h_0,h_1 \in A$. Then $f_1:=h_1^4f_0=h_1^3h_0 \in A$ is a pseudo-tame rational function on $X$.
Take $h_2\in A$ satisfying $-\ord_{x}(df_1)+4\deg(h_2)\geq r+1$
and set $f_2:=h_2^4f_1$. Let $2e:=-\ord_{x}(df_2)$.
%
Since we have $2e \geq r+1 \ge 8g$, 
Lemma \ref{lem:lem2} implies that we can take $h_3\in A$ 
with $\deg(f_2+h_3^4)=2e-1 \ge r$.
Then $f_3:=f_2+h_3^4\in A$ is a pseudo-tame rational function
satisfying the condition (1).
Finally by adding some constant to $f_3$, 
we obtain a rational function $f$ satisfying the desired properties.
\end{proof}

Finally we are ready to prove our main result.
%
%
\begin{proof}[{Proof of Theorem \ref{thm:tame exists}}]
Fix a pseudo-tame rational function $f\in A$ in the $\Gamma$-orbit of $f_0$
as in Lemma \ref{lem:lem4} with $r=\max(12g-2,0)$ and set $2e-1:=\deg(f)$. 
Let us denote by $Z$ the set of the zeroes of $df$ 
and for each $z\in Z$, set $2m_z:=\ord_{z}(df)$. 
Since we have $\deg(\mathrm{div}(df))=2g-2$, 
one has $\sum_{z \in Z}m_z=e+g-1$. 
Let $I\subset A$ be an ideal associated with the effective divisor 
$$
\sum_{z\in Z, \atop m_z > 1} \left( \lfloor m_z/2 \rfloor + 1 \right) z
$$
on $\Spec(A)=X\setminus\left\lbrace x \right\rbrace$. 
Then one has an inequality $\dim_k(A/I)\leq e+g-1$. 
We note that we can find an element $a\in A$ that has the
following property:
for any $h\in A$ with $h \equiv a \mod I$, $f^3+h^4$ is tame at any $z \in Z$. 

Let us fix such $a\in A$. 
Then Lemma \ref{lem:lem3} implies we have an element $h\in A$ with $h \equiv a \mod I$ and $\deg(h)<2g+(e+g-1)$. 
We note that $f^3+h^4$ is tame at any $z \in Z$. 
Now let us check that $f^3+h^4$ is tame at $x$. 
By our assumption, one has $2e-1 \geq 12g-2$. Since this implies that
$8g + 4(e+g-1) \le 3(2e-1)$, we obtain $4 \deg(h)<3\deg(f)$. Thus $f^3+h^4$ is tame at $x$. Finally we will see $f^3+h^4$ is tame at outside of $Z\cup\left\lbrace  x\right\rbrace $. The equation $d(f^3+h^4)=f^2df$ and the condition $(2)$ in Lemma \ref{lem:lem4} imply that $0\leq \ord_{y}(d(f^3+h^4))\leq 2$ for any $y \notin Z\cup\left\lbrace  x\right\rbrace $. Since $f^3+h^4$ is everywhere pseudo-tame, the inequality implies  that $f^3+h^4$ is tame at $y \notin Z\cup\left\lbrace  x\right\rbrace $. Thus we conclude that $f^3+h^4$ is a tamely ramified rational function on $X$. 
\end{proof}

\begin{proof}[{Proof of Theorem \ref{thm:existence_tame}}]
The assertion follows from \cite{F} if the characteristic of $k$
is not equal to $2$. When $k$ is of characteristic two,
the assertion follows from 
Corollary \ref{cor:p-tame exists} and  Theorem \ref{thm:tame exists}.
\end{proof}

\section{An upper bound of the minimum degree}\label{sec:upper}

\begin{thm}\label{thm:upper}
Let $X$ be a curve of genus $g \ge 1$, and let $x \in X$
be a closed point. Let $A = \Gamma(X \setminus \{x\}, \cO_X)$.
Then there exists an element $f \in A$ such that
$\deg(f) \le 48 g^2 + 22g -1$ and that $f$ is a pseudo-tame
function on $X$.
\end{thm}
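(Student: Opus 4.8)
The plan is to produce an explicit pseudo-tame function on $X$ by running the construction of a pseudo-tame function through the proof of Theorem \ref{thm:beta_vanishing} in a quantitatively controlled way, and to track the degrees of poles at $x$ throughout. First I would fix a rational function $\wt{f}\in k(X)$ whose only pole is at $x$ and whose pole order is as small as possible; by Riemann--Roch (as in Lemma \ref{lem: lem1}) one can take $\wt{f}\in A$ with $\deg(\wt{f})\le 2g+1$. The idea is that, instead of merely \emph{knowing} that the obstruction class $\beta(X)$ vanishes, I want to follow the route ``$(f,\beta(X))=0$ for all $f$ $\Rightarrow$ $\beta(X)=0$ $\Rightarrow$ a pseudo-tame function exists'' constructively, and bound the pole order at $x$ of the resulting pseudo-tame function in terms of $g$.

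The key steps, in order, are as follows. First I would record an affine covering $X=U\cup V$ adapted to $\wt{f}$ exactly as in the proof of Theorem \ref{thm:beta_vanishing}: let $S$ be the finite set of points where $\wt{f}$ fails to be tame, choose $g_1\in k(X)$ with $g_1^2+\wt{f}$ tame on $S$, let $T$ be the non-tame locus of $g_1^2+\wt{f}$, and set $U=X\setminus S$, $V=X\setminus T$. Next, using Theorem \ref{thm: beta_obstruction} together with $\beta(X)=0$, there exist classes $a_i\in H^0(U_i,B_X)$ trivializing the cocycle $a(f_i,f_j)$, and then Lemma \ref{lem:a(f,g)=a} produces, for each patch, an element $g_i\in\cH$ with $a(f_i,g_i)=a_i$, which is pseudo-tame on that patch; the cocycle computation in that proof shows a single $g_i$ is pseudo-tame on all of $X$. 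The third step is the quantitative heart: I must bound the pole order at $x$ of this $g_i$. This requires bounding (a) the number of points in $S$ and their contribution via $\deg(\wt{f})\le 2g+1$ and the degree of $d\wt{f}$, whose divisor has degree $2g-2$; (b) the pole orders of the correcting functions $g_1$ and of the trivializing sections $a_i$, controlled by Riemann--Roch estimates of the form in Lemmas \ref{lem: lem1} and \ref{lem:lem3}; and (c) the solution of the Tsen-theorem quadratic in Lemma \ref{lem:a(f,g)=a}, whose pole order at $x$ I would need to bound in terms of the input data. Combining these, together with the freedom to reduce pole orders modulo $k(X)^2$ and to translate by elements of $\Gamma$, I expect to arrive at a pseudo-tame function $f\in A$ with $\deg(f)\le 48g^2+22g-1$.

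The main obstacle will be the effective version of Tsen's theorem used in Lemma \ref{lem:a(f,g)=a}. The abstract statement only asserts existence of a nontrivial root of the quadratic form $F(T_1,T_2,T_3)=T_1T_3+T_2^2+bfT_3^2+bT_1^2$; to bound $\deg(g_i)$ I need a root whose pole order at $x$ is controlled by the degrees of the coefficients $b$ and $f$. The quadratic nature of $F$ is favorable here, since a conic over the function field $k(X)$ with a rational point admits an explicit rational parametrization, so in principle one can extract a root with pole order bounded linearly in the degrees of $b,\wt{f}$ and the genus $g$; the product of two such linear bounds is what should produce the quadratic term $48g^2$. The bookkeeping of how the pole orders propagate through the formula for $a(f,g)=A(f,g)\,g/(df/dg)$, and the need to keep everything inside $A$ by the degree-reduction lemmas, is where the constants $48$ and $22$ would be pinned down, and getting these constants exactly right is the delicate part of the argument.
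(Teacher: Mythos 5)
Your overall strategy --- run the existence proof of a pseudo-tame function effectively and track pole orders at $x$ --- is the same as the paper's, your starting function of degree $2g+1$ is the right one, and you have correctly identified the effective form of Tsen's theorem as the crux. But at that crux there is a genuine gap. You propose to extract a controlled root of $F(T_1,T_2,T_3)=T_1T_3+T_2^2+bfT_3^2+bT_1^2$ by rationally parametrizing the conic. A parametrization by lines requires a $k(X)$-point of the conic to project from, and the only such point available is the one supplied by the abstract statement of Tsen's theorem, whose pole order at $x$ is completely uncontrolled; the parametrization then produces points whose degrees are governed by the degree of that base point, not by $b$, $f$ and $g$. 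Turning ``a conic with a rational point has a point of small height'' into a theorem over $k(X)$ is essentially the whole difficulty, and it is not addressed by the parametrization remark. (In characteristic two one must also be careful that the bilinear form attached to $F$ is degenerate in the $T_2$-direction, so the conic is strange and the usual tangent-line arguments need modification.)

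What the paper does instead is quantify the proof of Tsen's theorem itself, and this requires an ingredient absent from your plan: a second function $t\in A$ of degree $2g$ with $k(X)/k(t)$ separable, chosen so that $1,f,\dots,f^{2g-1}$ are linearly independent over $k[t]$. After clearing denominators of $F$ by a $c\in A$ with $\deg(c)\le 4g$ (so that $cF$ has coefficients in $A$ of degree $\le 10g$), one forms the product $G$ of the Galois conjugates of $cF$ over $k(t)$, with the unknowns expanded as $\sum_i f^i S_{j,i}$, $S_{j,i}\in k[t]$. Then $G$ is a form of degree $4g$ in $6g$ variables over $k[t]$ with coefficient degrees at most $8g^2+10g-2$, and the dimension count in the proof of Tsen's theorem yields a nontrivial zero whose entries have $t$-degree at most $4g+2$; this gives $\deg(g_i)\le(2g-1)(2g+1)+(4g+2)\cdot 2g=12g^2+4g-1$ and hence the bound $4(12g^2+4g-1)+3(2g+1)=48g^2+22g-1$. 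A secondary point: your trivialization of the cocycle via Theorem \ref{thm: beta_obstruction} and Lemma \ref{lem:a(f,g)=a} is too abstract to start the bookkeeping. The paper works with the explicit two-set covering $U=X\setminus\{x\}$, $V=X\setminus Z$ ($Z$ the zeros of $df$), uses the identity $a(f,f+h^2)=(dh/df)^2f\bmod k(X)^2$ with $\deg(h)<6g$ supplied by Lemma \ref{lem:lem3}, and from this reads off $\deg(\wt{a})\le 10g-1$ and $\deg(b)\le 4g-1$; without an explicit formula of this kind you have no handle on the coefficient $b$ of your quadratic form.
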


\begin{proof}
By Lemma \ref{lem: lem1}, one can choose two elements $f,t \in A$
with $\deg(f)=2g+1$ and $\deg(t)=2g$.
By adding to $t$ a suitable element of smaller degree if necessary,
we may and will assume that $k(X)$ is a separable extension of $k(t)$.
%
%
Let $Z$ be the set of zeros of $df$.
For $z \in Z$, set $2m_z := \ord_z(df)$.
Let $I$ denote the ideal of $A$ associated with
the effective divisor $\sum_{z \in Z}(m_z+1)z$.
Then we can find an element $a\in A$ that has the
following property:
for any $h\in A$ with $h \equiv a \mod I$, 
$f+h^2$ is tame at any $z \in Z$. 
The argument in the proof of Theorem \ref{thm:tame exists} shows that
$\dim_k(A/I) \le 4g$.
Hence it follows from Lemma \ref{lem:lem3} one can find
$h \in A$ with $\deg(h)<6g$ such that
$f' := f+h^2$ is tame on $U := X \setminus \{x\}$.
Set $V=X \setminus Z$. It follows from the definition
that $\beta(X)$ is equal to the image of $a(f,f') \in H^0(U\cap V,B_X)$
under the map $\Psi_{B_X,U,V}$ in Section \ref{sec:vanishing}.
Since $\beta(X)=0$, there exists an element 
$a \in H^0(U,B_X) = A/A^2$ such that $a + a(f,f')$ is regular
at $x$.
Choose a unique $b \in k(X)$ satisfying
$a = b^2 f \mod{k(X)^2}$ and let us consider the
polynomial $F(T_1,T_2,T_3) = T_1T_3 + T_2^2 +bf T_3^2
+ b T_1^2$.
Then the argument in the proof of Lemma \ref{lem:a(f,g)=a} 
and Theorem \ref{thm: beta_obstruction} 
shows that, for any non-trivial solution $(g_1,g_2,g_3) \in A^3$ of 
$F(T_1,T_2,T_3)=0$, the element
$g_1^4 f + g_2^4 f^2 + g_3^4 f^3 \in A$ is a pseudo-tame
rational function on $X$.

Observe that $a(f,f') = (dh/df)^2 f \mod{k(X)^2}$
and that $\deg(dh/df) \le 4g-1$.
The latter implies $\deg((dh/df)^2 f) \le 10g -1$.
Since $10g-1 \ge 4g$, it follow from Lemma \ref{lem: lem1}
that we can find a lift $\wt{a}\in A$ of $a$
satisfying $\deg(\wt{a}) \le 10g-1$.
Observe that $b^2 = d \wt{a}/df$.
This implies $\deg(b) \le 4g-1$.
Since $\sum_{z \in Z} m_z = 2g$, it follows from
Lemma \ref{lem: lem1} that there exists a non-zero $c \in A$
with $\deg(c) \le 4g$ satisfying $b c \in A$.
Then we have $\deg(bc) \le 8g-1$.
Hence $cF(T_1,T_2,T_3)$ is a homogeneous quadratic polynomial
with coefficients in $A$ and the degree of each coefficient
is at most $10g$.

Note that, for any non-zero $\phi \in k[t]$ and 
$i \in \{0,1,\ldots,2g-1\}$, the degree of $\phi f^i$
is congruent to $i$ modulo $2g$.
This implies that $1,f,\ldots,f^{2g-1}$ are
linearly independent over $k[t]$.
Let $L$ be the Galois closure of $k(X)$ over $k(t)$
in a separable closure of $k(X)$, and let us consider 
the polynomial
$$
G = \prod_\sigma
\sigma(F)\left(\sum_{i=0}^{2g-1} \sigma(f)^i S_{1,i}, 
\sum_{i=0}^{2g-1} \sigma(f)^i S_{2,i},
\sum_{i=0}^{2g-1} \sigma(f)^i S_{3,i}
\right),
$$
where $\sigma$ runs over the element 
of $\Gal(L/k(t))/\Gal(L/k(X))$, in variables
$S_{1,i}$, $S_{2,i}$, $S_{3,i}$ ($i \in \{0,1,\ldots, 2g-1\}$).
Then $G$ is a homogeneous polynomial
of degree $4g$ in $6g$ variables,
and the coefficients of $G$ are polynomials in $k[t]$
whose degrees are at most 
$2(2g-1)\deg f + 10g = 8g^2 + 10g -2$.
Hence the argument in the proof of Tsen's theorem
(cf.\ \cite[Chap.\ 1, 6.2, Cor.\ 1.11]{Sh}) 
shows that there exists a
non-trivial solution of $G=0$ in
$k[t]^{6g}$ whose all entries are polynomials in $t$
of degree at most $4g+2$.
Since $1,f,\ldots,f^{2g-1}$ is
linearly independent over $k[t]$, this shows that
there exists a non-trivial solution
$(g_1,g_2,g_3) \in A^3$ of $F(T_1,T_2,T_3)=0$
satisfying $\deg(g_i) \le (2g-1)\deg(f) + (4g+2) \deg(t)
= 12g^2 + 4g -1$.
Thus one can find a pseudo-tame rational function
$g' = g_1^4 f + g_2^4 f^2 + g_3^4 f^3 \in A$ on $X$
of degree at most $4(12g^2+4g-1)+3\deg(f) =
48g^2 + 22g -1$, as desired.
\end{proof}

\begin{cor}
Let $X$, $x$, and $A$ be as in Theorem \ref{thm:upper}.
Then there exists an element $f \in A$ such that
$\deg(f) \le 144 g^2 + 66g -3$ and that $f$ is a 
tamely ramified rational function on $X$.
\end{cor}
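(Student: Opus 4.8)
The plan is to reduce the corollary to Theorem \ref{thm:upper} together with the ``cubing'' construction already used in the proof of Theorem \ref{thm:tame exists}. By Theorem \ref{thm:upper}, we obtain a pseudo-tame rational function $f \in A$ on $X$ with $\deg(f) \le 48g^2 + 22g - 1$. The aim is then to manufacture a \emph{tamely} ramified function out of $f$ while keeping explicit control of the degree, so I would essentially re-run the argument in the proof of Theorem \ref{thm:tame exists} but track the degree bounds quantitatively rather than qualitatively.

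First I would invoke Lemma \ref{lem:lem4} to replace $f$ by a function in its $\Gamma$-orbit, still lying in $A$, whose order of pole $\deg(f)$ at $x$ is odd. Here the key point to watch is that multiplying by a fourth power $h_1^4$ (to clear denominators, as in Lemma \ref{lem:lem4}) multiplies the degree; since the starting degree from Theorem \ref{thm:upper} is already $\le 48g^2 + 22g - 1$, I expect the resulting odd-degree pseudo-tame function to have degree roughly three times this bound, which is the source of the factor $3$ relating $48g^2 + 22g - 1$ to $144g^2 + 66g - 3$. Then, exactly as in the proof of Theorem \ref{thm:tame exists}, I would form $f^3 + h^4$ for a suitable $h \in A$ chosen (via Lemma \ref{lem:lem3}) to be tame at every zero $z$ of $df$, with $h$ of controlled degree $\deg(h) < 2g + (e+g-1)$. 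The inequalities $4\deg(h) < 3\deg(f)$ at $x$ and $0 \le \ord_y(d(f^3+h^4)) \le 2$ away from $Z \cup \{x\}$ then guarantee tameness everywhere, yielding a tamely ramified function.

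The degree bookkeeping is where I would concentrate the effort: the final tamely ramified function is $f^3 + h^4$, so its degree is $\max(3\deg(f), 4\deg(h))$, and I would verify that with the odd-degree pseudo-tame function of degree $\le 3(48g^2 + 22g - 1) = 144g^2 + 66g - 3$ the dominant term is $3\deg(f)$, landing exactly on the claimed bound $144g^2 + 66g - 3$. The main obstacle will be confirming that the cubing step does not inflate the degree beyond this bound and that the auxiliary correction term $h^4$ stays strictly smaller in degree (so that it does not interfere with the leading behaviour at $x$), i.e.\ checking $4\deg(h) < 3\deg(f)$ quantitatively rather than merely asymptotically. Since all the constituent lemmas already provide the needed explicit estimates, the proof amounts to assembling them and verifying the arithmetic of the bounds; no genuinely new idea beyond the cubing technique is required.
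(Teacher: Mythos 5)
Your overall strategy --- feed the output of Theorem \ref{thm:upper} into the cubing construction of Theorem \ref{thm:tame exists} and track degrees --- is exactly the paper's, but your degree bookkeeping, which is the entire content of this corollary, is inconsistent and would not land on the stated bound. You attribute the factor of $3$ relating $48g^2+22g-1$ to $144g^2+66g-3$ to the step that produces an odd-degree pseudo-tame function (clearing denominators by $h_1^4$ as in Lemma \ref{lem:lem4}), so that your odd-degree function already has degree about $3(48g^2+22g-1)=144g^2+66g-3$. But you then cube that function, and $\deg\bigl(f^3+h^4\bigr)=3\deg(f)$ once $4\deg(h)<3\deg(f)$; so your final function would have degree about $3(144g^2+66g-3)=432g^2+198g-9$, three times the claimed bound. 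You cannot both spend the factor of $3$ on the odd-degree normalization and claim the cubed function ``lands exactly'' on $144g^2+66g-3$.

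The paper avoids this by ensuring that the odd-degree pseudo-tame function $f'$ still satisfies $\deg(f')\le 48g^2+22g-1$: since Theorem \ref{thm:upper} already gives $f\in A$, no denominators need clearing, and the normalization is done by applying Lemma \ref{lem:lem2} (which \emph{replaces} $\deg(f)$ by $-\ord_x(df)-1\le\deg(f)$ rather than inflating it), premultiplying by $h^4$ with $\deg(h)=3g$ only in the case $-\ord_x(df)<8g$, where the resulting degree is small anyway. One also needs $\deg(f')>12g$ so that the bound $\deg(h')<2g+(e+g-1)$ from Lemma \ref{lem:lem3} yields $4\deg(h')<3\deg(f')$; this is why the paper records the two-sided estimate $12g<\deg(f')\le 48g^2+22g-1$. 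The single factor of $3$ then comes from the cubing alone, giving $3\deg(f')\le 144g^2+66g-3$. To repair your argument you must redo the normalization step so that it does not increase the degree beyond $48g^2+22g-1$, and verify the lower bound $\deg(f')>12g$ needed for the tameness at $x$.
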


\begin{proof}
By Theorem \ref{thm:upper}, one can take 
$f \in A$ such that
$\deg(f) \le 48 g^2 + 22g -1$ and that $f$ is a pseudo-tame
function on $X$.
By replacing $f$ with $f h^4$ for some suitable $h \in A$
with $\deg(h) = 3g$ if $-\ord_x(df) < 8g$, and then by
applying Lemma \ref{lem:lem2}, we obtain
an element $f' \in A$ such that $\deg(f')$ is odd
with $12g < \deg(f') \le 48 g^2 + 22g -1$ and that 
$f'$ is a pseudo-tame function on $X$.
Then the argument of the proof of Lemma \ref{lem:lem4} 
and Theorem \ref{thm:tame exists}
shows that a function of the form 
$(f'+c)^3 + h'^4$ for some $c \in k$ and
$h' \in A$ with $4 \deg(h') < 3 \deg(f')$ is a tamely
ramified rational function on $X$, which proves the
assertion.
\end{proof}

\end{document}